\documentclass[10.5pt,a4paper]{article}
\usepackage{amsfonts,amssymb}\usepackage{bbm}
\usepackage{graphicx,latexsym,euscript,makeidx,color,bm}
\usepackage{amsmath,amsfonts,amssymb,amsthm,thmtools,mathrsfs,enumerate}
\usepackage[colorlinks,linkcolor=blue,anchorcolor=green,citecolor=red]{hyperref}
\usepackage{graphicx,tikz,exscale,pagecolor}  

\usepackage{booktabs,subfigure,xcolor}



\usepackage{geometry}
\geometry{left=2.5cm,right=2.5cm,top=3.0cm,bottom=3.0cm}

     
 \def\sB{\mathscr{B}}  \def\cB{{\cal B}}

\def\dbE{\mathbb{E}}     
\def\dbF{\mathbb{F}} \def\sF{\mathscr{F}}  \def\cF{{\cal F}}  
     
 \def\sH{\mathscr{H}}  \def\cH{{\cal H}}

   \def\cL{{\cal L}}  
 \def\sM{\mathscr{M}}  \def\cM{{\cal M}}  
\def\dbN{\mathbb{N}}     
     
\def\dbP{\mathbb{P}}     
     
\def\dbR{\mathbb{R}}

\def\dbY{\mathbb{Y}}   \def\cY{{\cal Y}}  \def\By{{\bm y}}
   \def\cZ{{\cal Z}}  
\def\By{{\bf y}}
\def\ss{\smallskip}             \def\hb{\hbox}
\def\ms{\medskip}              \def\ae{\hbox{\rm a.e.}}
\def\bs{\bigskip}            \def\as{\hbox{\rm a.s.}}
\def\ds{\displaystyle}       
\def\ts{\textstyle}         
\def\no{\noindent}          
\def\ns{\noalign{\ss}}     
     
\def\rf{\eqref}            
         \def\hp{\hphantom}
\def\deq{\triangleq}     \def\({\Big (}       \def\nn{\nonumber}
\def\les{\leqslant}      \def\){\Big )}       
\def\ges{\geqslant}      \def\[{\Big[}        
\def\ti{\tilde}          \def\]{\Big]}        
\def\wt{\widetilde}      \def\q{\quad}        
\def\h{\widehat}         \def\qq{\qquad}      \def\1n{\negthinspace}
\def\cd{\cdot}           \def\2n{\1n\1n}      
\def\cds{\cdots}         \def\3n{\1n\2n}

\def\h{\widehat}
\def\wt{\widetilde}
\def\ti{\tilde}
\def\cd{\cdot}
\def\cds{\cdots}

\def\deq{\triangleq}
\def\les{\leqslant}
\def\ges{\geqslant}


\def\a{\alpha}        \def\G{\Gamma}      \def\Om{\Omega}   \def\om{\omega}
       \def\D{\Delta}   \def\d{\delta}        
\def\z{\zeta}         \def\Th{\Theta}  \def\th{\theta}    
\def\e{\varepsilon}     \def\l{\lambda}        
    \def\t{\tau}     \def\f{\varphi}     


\def\ba{\begin{array}}                \def\ea{\end{array}}
\def\bel{\begin{equation}\label}      \def\ee{\end{equation}}

\newtheoremstyle{indented}{}{}{\it}{\parindent}{\bfseries}{.}{.5em}{}
\theoremstyle{indented}

\newtheorem{theorem}{Theorem}[section]
\newtheorem{definition}[theorem]{Definition}
\newtheorem{proposition}[theorem]{Proposition}
\newtheorem{corollary}[theorem]{Corollary}
\newtheorem{lemma}[theorem]{Lemma}
\newtheorem{remark}[theorem]{Remark}
\newtheorem{example}[theorem]{Example}

\makeatletter
   
   \@addtoreset{equation}{section}
\makeatother
\sloppy  \allowdisplaybreaks[4]
\def\be{\begin{equation}}
\def\bel{\begin{equation}\label}
\def\ee{\end{equation}}
\def\bea{\begin{eqnarray}}
\def\eea{\end{eqnarray}}
\def\bt{\begin{theorem}\label}
\def\et{\end{theorem}}
\def\bc{\begin{corollary}\label}
\def\ec{\end{corollary}}
\def\bex{\begin{example}\label}
\def\ex{\end{example}}
\def\bl{\begin{lemma}\label}
\def\el{\end{lemma}}
\def\bp{\begin{proposition}\label}
\def\ep{\end{proposition}}
\def\br{\begin{remark}\label}
\def\er{\end{remark}}
\def\ba{\begin{array}}
\def\ea{\end{array}}
\def\bde{\begin{definition}\label}
\def\ede{\end{definition}}
\def\sqr#1#2{{\vcenter{\vbox{\hrule height.#2pt
              \hbox{\vrule width.#2pt height#1pt \kern#1pt \vrule width.#2pt}
              \hrule height.#2pt}}}}
\def\signed #1{{\unskip\nobreak\hfil\penalty50
              \hskip2em\hbox{}\nobreak\hfil#1
              \parfillskip=0pt \finalhyphendemerits=0 \par}}
\def\endpf{\signed {$\sqr69$}}

\begin{document}
\title{\bf Backward Stochastic Differential Equations and Backward Stochastic Volterra Integral Equations with Anticipating Generators}

\author{
Hanxiao Wang\thanks{College of Mathematics and Statistics, Shenzhen University, Shenzhen 518060, China
 (Email: {\tt hxwang@szu.edu.cn}). This author would like to thank  Chenchen Mou
  (of City University of Hong Kong) for some useful discussions.},~~~
Jiongmin Yong\thanks{Department of Mathematics, University of Central Florida,
                           Orlando, FL 32816, USA (Email: {\tt jiongmin.yong@ucf.edu}).
                           This author is supported in part by NSF Grant DMS-1812921.},
                          ~~~
Chao Zhou\thanks{ Department of Mathematics and Risk Management Institute, National University of Singapore,
                           Singapore 119076, Singapore (Email: {\tt matzc@nus.edu.sg}).
                           This author is supported by  NSFC Grant 11871364
                           as well as Singapore MOE  AcRF Grants R-146-000-271-112 and R-146-000-284-114.}}

\maketitle

\centerline{({\it This paper is dedicated to Professor Alain Bensoussan on the occasion of his 80th birthday})}

\bs

\no\bf Abstract. \rm For a backward stochastic differential equation (BSDE, for short), when the generator is not progressively measurable, it might not admit adapted solutions, shown by an example. However, for backward stochastic Volterra integral equations (BSVIEs, for short), the generators are allowed to be anticipating. This gives, among other things, an essential difference between BSDEs and BSVIEs. Under some proper conditions, the well-posedness of such kind of BSVIEs is established. Further,  the results are extended to path-dependent BSVIEs, in which the generators can depend on the future paths of unknown processes. An additional finding is that for path-dependent BSVIEs, in general, the situation of anticipating generators is not avoidable and
the adaptedness condition similar to that imposed for anticipated BSDEs by Peng--Yang \cite{Peng-Yang2009}
 is not necessary.

\ms

\no\bf Keywords. \rm backward stochastic Volterra integral equation, backward stochastic differential equation, anticipating generator,
path-dependence.

\ms

\no\bf AMS subject classifications. \rm 60H20, 60H10, 45D05, 45G10.

\section{Introduction}
Let $(\Om,\cF,\dbP)$ be a complete probability space on which a standard one-dimensional
Brownian motion $W=\{W(t);0\les t<\infty\}$ is defined.
The augmented natural filtration of $W$ is denoted by $\dbF=\{\cF_t\}_{t\ges0}$.
Let $T>0$ be a fixed time horizon. We denote
$$
L^p_{\cF_T}(\Om;\dbR^n)=\big\{\xi:\Om\to\dbR^n~| ~\xi \hbox{ is  $\cF_{T}$-measurable, } \dbE|\xi|^p<\infty\big\}.
$$
For any given $\xi\in L^p_{\cF_T}(\Om;\dbR^n)$ with $p>1$, let us consider the following backward stochastic differential equation (BSDE, for short) in its integral form:
\bel{BSDE}Y(t)=\xi+\int_t^T g(s,Y(s),Z(s))ds-\int_t^TZ(s)dW(s),\q t\in[0,T],\ee
where $g:[0,T]\times\dbR^n\times\dbR^n\times\Om\to\dbR^n$ is a given $\cB([0,T]\times\dbR^n\times\dbR^n)\otimes\cF_T$-measurable map, which is called the {\it generator}.
Any pair $(Y(\cd),Z(\cd))$ of $\dbF$-adapted processes satisfying \rf{BSDE} in the It\^o's sense is called an {\it adapted solution}. To guarantee the well-posedness of \rf{BSDE}, the following standard conditions are normally assumed.

\ms

{\bf(H1)$_p$} The process $s\mapsto g(s,y,z)$ is $\dbF$-progressively measurable
on $[0,T]$, for all $(y,z)\in\dbR^n\times\dbR^n$. The map $(y,z)\mapsto g(s,y,z)$
is uniformly Lipschitz continuous, and
$$\dbE\Big(\int_0^T|g(s,0,0)|ds\Big)^p<\infty. $$

In the above, the uniformly Lipschitz condition can be relaxed a little. For
convenience, if $s\mapsto g(s,y,z)$ is $\dbF$-progressively measurable, we refer
to $g(\cd)$ as an {\it adapted generator}. If $s\mapsto g(s,y,z)$ is merely
$\sF_T$-measurable, we refer to it as an {\it anticipating generator}. The following
result is by now standard (see Pardoux--Peng \cite{Pardoux-Peng1990} and El Karoui--Peng--Quenez \cite{El Karoui-Peng-Quenez1997}, for example).

\begin{theorem}\label{BSDE-well-posed} \sl Let {\rm(H1)$_p$} hold with $p>1$. Then, for any
$\xi\in L^p_{\cF_T}(\Om;\dbR^n)$, BSDE \rf{BSDE} admits a unique adapted solution
$(Y(\cd),Z(\cd))$ and the following holds:
\bel{BSDE-estimate1}\dbE\[\sup_{t\in[0,T]}|Y(t)|^p+\(\int_0^T|Z(s)|^2ds\)^{p
\over2}\]\les K\dbE\[|\xi|^p+\(\int_0^T|g(s,0,0)|ds\)^p\].\ee
Hereafter, $K>0$ stands for a generic constant which could be different from line to
line. Moreover, if $\bar g(\cd)$ is another generator satisfying {\rm(H1)$_p$} as well,
and $\bar\xi\in L^p_{\sF_T}(\Om;\dbR^n)$, then by letting $(\bar Y(\cd),\bar Z(\cd))$
be the corresponding adapted solution, one has the following stability estimate:
\begin{align}
&\dbE\[\sup_{t\in[0,T]}|Y(t)-\bar Y(t)|^p+\(\int_0^T|Z(s)-\bar Z(s)|^2ds\)^{p\over2}\]\nn\\
&\q\les K\dbE\[|\xi-\bar\xi|^p+\(\int_0^T\big|g(s,Y(s),Z(s))-\bar g(s,Y(s),Z(s))\big|ds\)^p\].\label{BSDE-stability}
\end{align}
\end{theorem}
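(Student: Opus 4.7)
The plan is to establish existence and uniqueness by a Picard/contraction argument on a weighted Banach space of $\dbF$-adapted process pairs, then to derive both \eqref{BSDE-estimate1} and \eqref{BSDE-stability} from a single It\^o--BDG computation.

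First I would fix $\beta>0$ (to be tuned) and work on the Banach space $\cM^p_\beta$ of $\dbF$-adapted pairs $(Y,Z)$ with norm
$$\|(Y,Z)\|^p_\beta=\dbE\Big[\sup_{t\in[0,T]}e^{\beta pt}|Y(t)|^p+\Big(\int_0^T e^{2\beta s}|Z(s)|^2 ds\Big)^{p/2}\Big].$$
For $(U,V)\in\cM^p_\beta$, the Lipschitz bound $|g(s,y,z)|\les|g(s,0,0)|+L(|y|+|z|)$ combined with (H1)$_p$ shows $\eta:=\xi+\int_0^T g(s,U(s),V(s))ds\in L^p_{\cF_T}$. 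The martingale representation theorem applied to $\dbE[\eta|\cF_t]$ produces a unique square-integrable integrand $Z$, and setting $Y(t):=\xi+\int_t^T g(s,U,V)ds-\int_t^T Z\,dW$ produces an $\dbF$-adapted $Y$---the progressive measurability of $g$ is essential here so that $\int_0^\cdot g(s,U,V)ds$ is $\dbF$-adapted. This defines the Picard map $\Phi(U,V):=(Y,Z)$.

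To show $\Phi$ is a contraction for $\beta$ large, and simultaneously to prove \eqref{BSDE-estimate1}, I would apply It\^o's formula to $e^{\beta pt}|Y(t)|^p$ (for $1<p<2$, first to $(|Y|^2+\e)^{p/2}$ and let $\e\da0$). The drift $p e^{\beta pt}|Y|^{p-2}Y\cdot g(s,Y,Z)$ is controlled via Lipschitz continuity and Young's inequality: the $|Z|^2$ contribution is absorbed into the It\^o correction $\tfrac12 p(p-1)e^{\beta pt}|Y|^{p-2}|Z|^2$, and the linear-in-$Y$ contribution is absorbed by the exponential term $\beta p e^{\beta pt}|Y|^p$ once $\beta$ is large enough. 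Taking expectations, applying BDG to the resulting stochastic integral, and Gronwall yields \eqref{BSDE-estimate1}; the same computation applied to the difference of two Picard iterates gives the contraction, hence existence and uniqueness.

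Finally, for \eqref{BSDE-stability}, $(\D Y,\D Z):=(Y-\bar Y,Z-\bar Z)$ is the adapted solution of the BSDE with terminal $\xi-\bar\xi$ and generator
$$\ti g(s,y,z):=\bar g(s,y+\bar Y(s),z+\bar Z(s))-\bar g(s,\bar Y(s),\bar Z(s))+\big[g(s,Y(s),Z(s))-\bar g(s,Y(s),Z(s))\big],$$
which is Lipschitz in $(y,z)$ (by the Lipschitz property of $\bar g$) and satisfies $\ti g(s,0,0)=g(s,Y,Z)-\bar g(s,Y,Z)$. Substituting into \eqref{BSDE-estimate1} yields \eqref{BSDE-stability}. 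The main technical obstacle will be the case $1<p<2$: $y\mapsto|y|^p$ is not $C^2$, so the It\^o step must be carried out on the regularization $(|Y|^2+\e)^{p/2}$ and the Young's-inequality constants must be tracked carefully to keep the $|Z|^2$ terms absorbable after letting $\e\da 0$. The remaining bookkeeping is standard.
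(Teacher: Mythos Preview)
Your approach is correct and is essentially the standard Briand--Delyon--Hu--Pardoux--Stoica argument for $L^p$ BSDEs. However, the paper does \emph{not} actually prove this theorem: it is stated as a known result, with references to Pardoux--Peng \cite{Pardoux-Peng1990} and El Karoui--Peng--Quenez \cite{El Karoui-Peng-Quenez1997}. So there is no paper-internal proof to compare against directly.

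That said, in the Appendix the paper does prove a closely related estimate (Proposition~\ref{BSDE-prop1}) for the special case where the generator depends only on $Z$, and the technique there is genuinely different from yours. Rather than introducing an exponential weight $e^{\beta t}$ and tuning $\beta$, the paper partitions $[0,T]$ into $N$ subintervals chosen so that $2\tilde K_p\big(\int_{t_{k-1}}^{t_k}L_z^1(s)^2 ds\big)^{1/2}<1$ on each piece, obtains a local estimate via the martingale moment inequality, and then iterates across the partition. The payoff of that approach is an \emph{explicit} constant $\hat K_p$ (see \eqref{K_p^0-1}), which the paper later needs in order to formulate the quantitative smallness condition \eqref{K<1} on the anticipating part $g_0$. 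Your weighted-norm method is cleaner to execute and handles the $Y$-dependence more naturally, but the resulting constant is less transparent---which is fine for Theorem~\ref{BSDE-well-posed} as stated, but would not serve the paper's downstream purpose.
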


\ms
We point out that under (H1)$_p$, the generator of BSDE \rf{BSDE} is adapted. Therefore,
it is natural to ask: \emph{Can the generator $g(\cd)$ of BSDEs be anticipating?} From
\rf{BSDE}, one intuitively sees that although the sum of the first two terms on the
right-hand side is  $\sF_T$-measurable, but the last term is an It\^o's integral
over $[t,T]$, which might make the whole sum $\dbF$-adapted. However, this is not that
simple. Let us present the following simple example.

\begin{example}\label{eample1} \rm Consider the following BSDE:
\bel{BSDE-Example}Y(t)=\int_t^TW(T)ds-\int_t^TZ(s)dW(s),\qq t\in[0,T].\ee
We see that the above BSDE has the zero terminal state and has the generator:
$$g(s,y,z)=W(T),\qq(s,y,z)\in[0,T]\times\dbR\times\dbR,$$
which is anticipating, and independent of $(y,z)$. We claim that BSDE \rf{BSDE-Example} does not have an adapted solution $(Y(\cd),Z(\cd))$. To see this, let us denote
$$\psi(t)=\int_t^TW(T)ds=W(T)(T-t),\qq t\in[0,T].$$
Clearly, $\psi(\cd)$ is an $\cF_T$-measurable process (not $\dbF$-adapted).
Then, \rf{BSDE-Example} can be rewritten as the following
\bel{BSDE-Example1}Y(t)=\psi(t)-\int_t^T Z(s)dW(s),\qq t\in[0,T].\ee
Now, we introduce the following equation:
\bel{BSVIE-Example}\cY(t)=\psi(t)-\int_t^T \cZ(t,s)dW(s),\qq t\in[0,T],\ee
which is a special form of backward stochastic Volterra integral equations (BSVIEs, for short; see the next section for details). From Yong \cite{Yong2008}, it is known  that
such a BSVIE admits a unique adapted solution $(\cY(\cd),\cZ(\cd,\cd))$, which has the explicit form:
$$
(\cY(s),\,\cZ(t,s))=(W(s)(T-s),\,T-t),\qq 0\les t\les s\les T.
$$
Thus, if \rf{BSDE-Example} (i.e., \rf{BSDE-Example1}) has an adapted solution $(Y(\cd),Z(\cd))$, then we must
have
\bel{1.8}Z(s)=\cZ(t,s)=T-t,\qq 0\les t\les s\les T,\ee
because the adapted solution of \rf{BSDE-Example} is also an adapted solution of
BSVIE \rf{BSVIE-Example}. Clearly, \rf{1.8} cannot be true.

\end{example}

The above example tells us the following: (i) BSDEs with anticipating generators might not have adapted solutions, and they could have adapted solutions within the framework of BSVIEs; (ii) BSVIEs with anticipating generators might have adapted solutions, under some additional conditions. The purpose of this paper is to explore the results of BSVIEs with anticipating generators. We will establish a general theory for such kind of BSVIEs, and beyond.

\ms

Besides the mathematical considerations, one might be wondering if there are some
practical motivations for BSVIEs with anticipating generators. We now describe some
possible situations.

\ms

Consider a position process $\psi(\cd)$ on the time horizon $[0,T]$, which is
$\sF_T$-measurable (not necessarily $\dbF$-adapted). As in Yong \cite{Yong2007} and Wang--Sun--Yong \cite{Wang-Sun-Yong2021},
one may use a BSVIE to describe the equilibrium dynamic risk measure of
$\psi(\cd)$\footnote{One may also call it the equilibrium utility/disutility of
$\psi(\cd)$.}.
More precisely, if $(Y(\cd),Z(\cd\,,\cd))$ is an adapted solution to the following
BSVIE:
$$Y(t)=\psi(t)+\int_t^Tg(t,s,Y(s),Z(t,s))ds-\int_t^TZ(t,s)dW(s),\qq t\in[0.T],$$
for some suitable generator $g(\cd)$, then
$$\rho(t,\psi(\cd))=-Y(t),\qq t\in[0,T],$$
is an {\it equilibrium dynamic risk measure} of position process $\psi(\cd)$. It is not
hard to understand that the generator $g(\cd)$ could be a deterministic function, or an
$\dbF$-progressively measurable random field. These cases were covered in the literature; see \cite{Yong2008,Wang-Sun-Yong2021}, for examples. We now describe some situations that the generator $g(\cd)$ is anticipating (or $\sF_T$-measurable). Suppose the environment (in which a BSVIE is considered) is depending on some random events, represented by a random variable, say, $\th$. These random events could include something like default payments, bankruptcy of some relevant companies/institutes, the approval/disapproval of FDA for some medicines, election results, pass/not pass some tax bills, possible in-coming monetary policies, some possible regional wars, to mention a few. The common feature of all these is that they are not $\dbF$-adapted, and even not
$\cF_T$-measurable. However, by the time $T$, it will be known whether such
an event has happened. Therefore, we may accept that the generator of the BSVIE is of form $g(t,s,\dbE[\th\,|\,\cF_T],y,z)$, or simply allow $g(\cd)$ to be
$\sF_T$-measurable\footnote{Of course, one could pursuit a more general framework.
However, at this stage, we are satisfied with such a formulation.}.

\ms

In this paper, we will establish the well-posedness of BSVIEs with anticipating
generators, under proper conditions, which includes the situation of adapted
generators. The results will be extended to the path-dependent BSVIE, which is an essential extension of
the so-called {\it anticipated BSDEs} introduced by Peng--Yang \cite{Peng-Yang2009}.
Let us mention that in studying path-dependent BSVIEs, one will see that the situation
of anticipating generators is not avoidable, unless some restrictive (or special)
cases are discussed. We will explain this in more details in a later section.

\ms

The rest of the paper is organized as follows. In \autoref{Subsec:literature}, we provide a literature review on the theory and applications of BSVIEs.
In \autoref{Sec:Pre}, we give the preliminaries and recall the approach developed by Yong \cite{Yong2008} to the well-posedness of BSVIEs with adapted generators.
In \autoref{Sec:Well-posedness}, we establish the well-posedness of BSVIEs with anticipating generators under proper conditions.
The results of path-dependent BSVIEs are presented in  \autoref{Sec:path-dependent-BSVIE}.
In \autoref{Sec:BSVIE-FBSDE}, we establish a connection between BSVIEs and coupled FBSDEs.
Some concluding remarks are presented in \autoref{Sec:Concludng} and
some technical results are collected in Appendix.

\subsection{Literature Review on BSVIEs}\label{Subsec:literature}
As a natural extension of BSDEs, BSVIE was initially studied by Lin \cite{Lin-2002} in a special form of the following:
\bel{BSVIE-I}Y(t)=\psi(t)+\int_t^Tg(t,s,Y(s),Z(t,s))ds-\int_t^TZ(t,s)dW(s),\qq
t\in[0,T].\ee
Motivated by the study of optimal control for forward SVIEs,
Yong \cite{Yong2008} introduced the notion of {\it adapted M-solution} and the following more general  form of BSVIEs:
\bel{BSVIE-II}Y(t)=\psi(t)+\int_t^Tg(t,s,Y(s),Z(t,s),Z(s,t))ds-\int_t^TZ(t,s)dW(s),\qq
t\in[0,T].\ee
The difference is that $Z(s,t)$ appears in \rf{BSVIE-II}, not in \rf{BSVIE-I}.
Naturally, we call \rf{BSVIE-I} a {\it Type-I BSVIE} and \rf{BSVIE-II} a {\it Type-II BSVIE}, respectively.
This important development has triggered extensive research on BSVIEs
and their applications. For instance, the well-posedness of BSVIEs was investigated by
Wang--Zhang \cite{Wang-Zhang2007}, Ren \cite{Ren2010}, Anh--Grecksch--Yong \cite{Anh-Grecksch-Yong2011}, Shi--Wang--Yong \cite{Shi-Wang-Yong2013}, Shi--Wen--Xiong \cite{Shi-Wen-Xiong2020}, Wang--Sun--Yong \cite{Wang-Sun-Yong2021}, and Hamaguchi \cite{Hamaguchi2021-COCV} in different settings;
Wang--Yong \cite{Wang-Yong2015} established  the comparison theorems of BSVIEs;
the variation of constants formula for linear BSVIEs was developed by Hu--{\O}ksendal \cite{Hu2018},
Wang--Yong--Zhang \cite{Wang-Yong-Zhang2021} and Hamaguchi \cite{Hamaguchi2021};
the Feynman--Kac  formula for BSVIEs was  established by Wang--Yong \cite{T.Wang-Yong2019}, Wang \cite{H.Wang2021},
and Wang--Yong--Zhang \cite{Wang-Yong-Zhang2021} within  Markovian and non-Markovian frameworks, respectively.
It is noteworthy that in \cite{Wang-Yong-Zhang2021},  the forward and backward equations are both of Volterra type,
and then the associated PDE is non-local and path-dependent, due to which the corresponding viscosity solution theory
is challenging. A fundamental tool is the functional It\^o's formula established by Viens--Zhang \cite{Viens-Zhang2019}.
Some applications of BSVIEs on optimal controls and mathematical finance
can be found in  \cite{Agram-Oksendal2015,Shi-Wang-Yong 2015,T.Wang-Zhang2017,T.Wang-2020,Wang-Yong2022}
and \cite{Yong2007,Di Persio2014,Kromer-Overbeck2017,Agram2018,Wang-Sun-Yong2021}, respectively.
In particular, we remark that using BSVIEs, Wang--Yong \cite{Wang-Yong2022} recently developed a spike
variation method for controlled forward SVIEs.

\ms
Besides the above works, another remarkable development of BSVIEs is the finding of its connection with time-inconsistent control problems. Indeed, BSVIE has become a popular tool for studying time-inconsistent problems recently.
See, for example, Wang--Sun--Yong \cite{Wang-Sun-Yong2021}, Wang--Yong \cite{H.Wang-Yong2021}, Hern\'{a}ndez--Possama\"{\i}   \cite{Hernandez2020,HP2}, Hamaguchi \cite{Hamaguchi2020},  Agram--Djehiche \cite{Agram-Djehiche2020},
Lei--Pun \cite{Lei-Pun2021} and  Hern\'{a}ndez \cite{Hernandez2021}.
From Yong \cite{Yong2012,Yong2014}, we know that one of the main reasons of causing time-consistency is the so-called
{\it subjective time-preferences}, which,  mathematically, can be described by the non-exponential discounting.
It is shown \cite{Wang-Sun-Yong2021,H.Wang-Yong2021} that the most suitable
dynamic recursive cost functional allowing non-exponential discounting should be described by a Type-I
BSVIE. As a Feynman--Kac representation of the equilibrium HJB equation (or called an extended HJB equation) associated with time-inconsistent problems,
Wang--Yong \cite{H.Wang-Yong2021}  introduced and initially investigated the following  BSVIE:
\bel{BSVIE-III}
Y(t)=\psi(t)+\int_t^Tg(t,s,Y(s),Z(s,s))ds-\int_t^TZ(t,s)dW(s),\qq
t\in[0,T].\ee
The main feature of BSVIE \rf{BSVIE-III}
is that the diagonal value $Z(s,s)$ of $Z(\cd\,,\cd)$ appears in the generator.
In the Markovian case, Wang--Yong \cite{H.Wang-Yong2021} proved that \rf{BSVIE-III} admits an adapted solution by an analytic approach, and the uniqueness of adapted solutions was also discussed.
The general well-posedness of such kind of BSVIEs was recently studied by Hern\'{a}ndez--Possama\"{\i} \cite{HP2}.
A key step before \cite{HP2} is that  Hamaguchi  \cite{Hamaguchi2020}  gave a proper definition for $Z(s,s)$,
using the derivatives of $Z(t,s)$ with respect to $t$.
In \cite{Wang-Yong-Zhou2022}, we coined the name of {\it Type-III BSVIEs} for \rf{BSVIE-III}
to distinguish it from the other two types of BSVIEs.
Moreover, in \cite{Wang-Yong-Zhou2022} we found that  Type-III BSVIEs can be used to represent the M-solutions of linear Type-II BSVIEs,
which serves as a bridge to the decoupling approach of  stochastic Hamiltonian systems of Volterra type.

\ms
We emphasize that the generators of BSVIEs in all the above works  are assumed to be adapted. In the current paper, we would like to study the results of BSVIEs with anticipating generators.
More precisely, we shall focus on the Type-I and Type-II BSVIEs,
and  the results of Type-III BSVIEs  will be reported in our future publications.

\section{Preliminaries}\label{Sec:Pre}

In this section, we will recall a standard approach in establishing the well-posedness of BSVIEs with adapted generators, from which, we will see why such a standard method does not apply to the case of anticipating generators.

\ms

Recall that in \rf{BSVIE-I} and \rf{BSVIE-II}, $g(\cd)$ and $\psi(\cd)$ are called the {\it generator} and the
{\it free term} of the BSVIE. The solution that we are looking for is a pair
$(Y(\cd),Z(\cd\,,\cd))$ of processes satisfying certain conditions. In order to make
the above more precise, let us denote
$$\D[0,T]=\big\{(t,s)\in[0,T]^2\bigm|0\les t<s\les T\big\},$$
which is an upper triangle domain in $[0,T]^2$. Note that $\D[0,T]$ does not contain the diagonal segment $\{(s,s)\bigm|s\in[0,T]\}$. This will allow our BSVIEs to have some singularities along the diagonal segment. We now introduce some spaces. Let $\cH^p_\D[0,T]$ be the set of all pairs $(Y(\cd),
Z(\cd\,,\cd))$ of processes having the following properties: $Y:[0,T]\times\Om\to
\dbR^n$ is $\dbF$-adapted, $Z:\D[0,T]\times\Om\to\dbR^n$ is
$\cB(\D[0,T])\otimes\cF_T$-measurable, for each $t\in[0,T]$, $s\mapsto Z(t,s)$ is
$\dbF$-adapted on $[t,T]$, and
$$\|(Y(\cd),Z(\cd\,,\cd))\|_{\cH^p_\D[0,T]}\equiv\Big\{\dbE\[\int_0^T|Y(t)|^pdt
+\int_0^T\(\int_t^T|Z(t,s)|^2ds\)^{p\over2}dt\]\Big\}^{1\over p}<\infty.$$
Clearly, the above is a norm under which $\cH^p_\D[0,T]$ is a Banach space. Let
$\cH^p[0,T]$ be the set of all pairs $(Y(\cd),Z(\cd\,,\cd))$ of processes having the
following properties: $Y:[0,T]\times\Om\to\dbR^n$ is $\dbF$-adapted, $Z:[0,T]^2
\times\Om\to\dbR^n$ is $\cB([0,T]^2)\otimes\cF_T$-measurable, for each $t\in[0,T]$,
$s\mapsto Z(t,s)$ is $\dbF$-adapted on $[0,T]$, and
$$\|(Y(\cd),Z(\cd\,,\cd))\|_{\cH^p[0,T]}\equiv\Big\{\dbE\[\int_0^T|Y(t)|^pdt
+\int_0^T\(\int_0^T|Z(t,s)|^2ds\)^{p\over2}dt\]\Big\}^{1\over p}<\infty.$$
The above is also a norm under which $\cH^p[0,T]$ is a Banach space. Note that the difference between $\cH^p_\D[0,T]$ and $\cH^p[0,T]$ is that for any $(Y(\cd),Z(\cd\,,\cd))$, if it is in the former, then $Z(\cd\,,\cd)$ is defined only on the upper triangle domain $\D[0,T]$, whereas if it is in the latter, $Z(\cd\,,\cd)$ is defined on the square domain $[0,T]^2$. Next, let
$\cM^p[0,T]$ be the subspace of $\cH^p[0,T]$, consisting of pairs $(Y(\cd),
Z(\cd\,,\cd))$ satisfying the following additional condition:
\bel{M-1}Y(t)=\dbE[Y(t)]+\int_0^tZ(t,s)dW(s),\qq\ae\,t\in[0,T],~\as\ee
Note that for any $(Y(\cd),Z(\cd\,,\cd))\in\cM^p[0,T]$, one has
$$\dbE\(\int_0^t|Z(t,s)|^2ds\)^{p\over2}\les K_p\dbE\Big|\int_0^tZ(t,s)dW(s)\Big|^p
=K_p\dbE|Y(t)-\dbE[Y(t)]|^p\les2^{p-1}K_p\dbE|Y(t)|^p.$$
In the above, the first inequality is called a {\it martingale moment inequality}
(see \autoref{Martingale Moment} in Appendix), with $K_p$ only depending on $p>1$, and $K_2=1$. Thus, for any $(Y(\cd),Z(\cd\,,\cd))\in\cM^p[0,T]$,
\begin{align}
&\|(Y(\cd),Z(\cd\,,\cd))\|_{\cH^p[0,T]}^p=\dbE\[\int_0^T|Y(t)|^pdt
+\int_0^T\(\int_0^T|Z(t,s)|^2ds\)^{p\over2}dt\]\nn\\
&\q\les 2^{(p-2)^+\over2}\|(Y(\cd),Z(\cd\,,\cd))\|_{\cH_\D^p[0,T]}^p+2^{(p-2)^+\over2}\dbE\int_0^T\(\int_0^t|Z(t,s)|^2
ds\)^{p\over2}dt\nn\\
&\q\les 2^{(p-2)^+\over2}\|(Y(\cd),Z(\cd\,,\cd))\|_{\cH_\D^p[0,T]}^p+2^{p-1+{(p-2)^+\over2}}K_p
\dbE\int_0^T|Y(t)|^pdt\nn\\
&\q\les K\|(Y(\cd),Z(\cd\,,\cd))\|_{\cH_\D^p[0,T]}^p\les K\|(Y(\cd),
Z(\cd\,,\cd))\|_{\cH^p[0,T]}^p.\label{H-M}
\end{align}
This shows that in $\cM^p[0,T]$, the norms $\|\cd\|_{\cH^p[0,T]}$ and $\|\cd\|_{\cH^p_\D[0,T]}$ are equivalent. We will use $\|\cd\|_{\cM^p[0,T]}$ to denote the induced norm in $\cM^p[0,T]$ (as a subspace of $\cH^p[0,T]$). For any $0\les R<S\les T$,
$\D[R,S]$, $\cH^p_\D[S,T]$, $\cH^p[S,T]$, and $\sM^p[R,S]$ can be defined similarly.
In the following,  we collect some other spaces for the convenience of the reader.
\begin{align*}
C([R,S];\dbR^n)
&\ts=\Big\{\f:[R,S]\to\dbR^n~|~\hb{$\f(\cd)$ is continuous, }\sup_{s\in[R,S]}|\f(s)| <\infty\Big\},\\
L^p_{\cF_T}(R,S;\dbR^n)
&\ts=\Big\{\f:[R,S]\times\Om\to\dbR^n~|~\hb{$\f(\cd)$ is $\cB([R,S])\otimes\cF_T$-measurable, }\\
&\qq\dbE\int_R^S|\f(s)|^p ds<\infty\Big\},\\
L^p_{\dbF}(\Om;L^q(R,S;\dbR^n))
&\ts=\Big\{\f:[R,S]\times\Om\to\dbR^n~|~\hb{$\f(\cd)$ is $\dbF$-progressively measurable, }\\
&\qq \dbE\(\int_R^S|\f(s)|^q ds\)^{p\over q}<\infty\Big\},\\
L^p_{\dbF}(R,S;\dbR^n)
&\ts=L^p_{\dbF}(\Om;L^p(R,S;\dbR^n)),\\
L_\dbF^p(\Om;C([R,S];\dbR^n))
&\ts=\Big\{\f(\cd)\in L^p_{\dbF}(R,S;\dbR^n)~|~\f(\cd)~\hb{is continuous,  }\dbE\big[\ds\sup_{R\les s\les S}|\f(s)|^p\big]<\infty \Big\},\\
L^p_{\dbF}(\Om;L^q(\D[R,S];\dbR^n))
&\ts=\Big\{\f:\1n\D[R,S]\1n\times\1n\Om\1n\to\1n\dbR^n~|~\f(t,\cd)\in L^p_{\dbF}(\Om;L^q(t,S;\dbR^n)), \, \ae~t\1n\in\1n[R,S],\\
&\ts\hp{=\Big\{\ }\qq\qq\qq\qq\qq \sup_{t\in[R,S]}\dbE\(\int_t^S|\f(t,s)|^qds\)^{p\over q}<\infty \Big\}.
\end{align*}

\ms

Next, we recall the following definition.

\bde{solution} \rm
(i) $(Y(\cd),Z(\cd\,,\cd))\in\cH^p_\D[0,T]$ is called an {\it adapted solution}
of Type-I BSVIE \rf{BSVIE-I} if \rf{BSVIE-I} is satisfied in the usual
It\^o's sense.

\ms

(ii) $(Y(\cd),Z(\cd\,,\cd))\in\cM^p[0,T]$ is called an {\it adapted M-solution} of Type-II
BSVIE \rf{BSVIE-II} if \rf{BSVIE-II} is satisfied in the usual It\^o's sense.

\ede

Note that for an adapted solution $(Y(\cd),Z(\cd\,,\cd))$, $Z(\cd\,,\cd)$ is defined
on the upper triangle domain $\D[0,T]$ only. Whereas, for an adapted M-solution $(Y(\cd),Z(\cd\,,\cd))$,
$Z(\cd\,,\cd)$ is defined on the square domain $[0,T]^2$ and \rf{M-1} is satisfied.

\ms

Now, we recall the standard approach to BSVIEs with adapted generators, and will reveal why the method is not directly applicable to the case of anticipating generators. For the simplicity,
we only consider the Type-I BSVIEs with $p=2$.
Let us now introduce the following hypothesis.

\ms

{\bf(H2)$_2$} The map $g:\D[0,T]\times\dbR^n\times\dbR^n\times\Om\to\dbR^n$
is $\sB(\D[0,T]\times\dbR^n\times\dbR^n)\otimes\sF_T$-measurable, and for each $(t,y,z)\in
[0,T]\times\dbR^n\times\dbR^n$, the process
$s\mapsto g(t,s,y,z)$ is $\dbF$-progressively measurable. Further, there are deterministic
measurable functions $L_y,L_z:\D[0,T]\to[0,\infty)$ having properties that for
some $\e>0$,
$$\int_0^T\(\int_t^TL_y(t,s)^2ds\)^{1+\e}dt+\sup_{t\in[0,T]}\int_t^TL_z(t,s)^{2+\e}ds
<\infty,$$
such that
$$\ba{ll}
\ns\ds|g(t,s,y,z)|\les|g(t,s,0,0)|+L_y(t,s)|y|+L_z(t,s)|z|,
\q(t,s)\in\D[0,T],~y,z\in\dbR^n,\\
\ns\ds|g(t,s,y_1,z_1)\1n-\1n g(t,s,y_2,z_2)|\1n\les\1n L_y(t,s)|y_1\1n-\1n y_2|\1n+\1n L_z(t,s)
|z_1\1n-\1n z_2|,\q(t,s)\in\D[0,T],~y_i,z_i\in\dbR^n,~i=1,2.\ea$$
Also,
$$\dbE\int_0^T\(\int_t^T|g(t,s,0,0,0)|ds\)^2dt<\infty.$$

\ms

Below, we will denote
$$L_0(t,s)=|g(t,s,0,0,0)|,\qq\forall(t,s)\in\D[0,T].$$
We now consider the well-posedness of Type-I BSVIE \rf{BSVIE-I}, under (H2)$_2$.

\ms

Let a pair $(y(\cd),z(\cd\,,\cd))\in\sH_\D^2[0,T]$ and $\psi(\cd)\in
L^2_{\sF_T}(0,T;\dbR^n)$ be given. For almost every $t\in[0,T]$ with
$\psi(t)$ and $g(t,\cd\,,\cd\,,\cd)$ being fixed,
we consider the following BSDE for $(\eta(t,\cd),\z(t,\cd))$:
\bel{BSVIE2}\eta(t,r)=\psi(t)+\int_r^Tg(t,s,y(s),\z(t,s))ds
-\int_r^T\z(t,s)dW(s),\q r\in[t,T],\ee
with $t\in[0,T)$ being a parameter. Clearly, the above is a BSDE (on $[t,T]$) with
adapted  generator $(s,\eta,\z)\mapsto\wt g(s,\eta,\z)\equiv g(t,s,y(s),\z)$
(independent of $\eta$) satisfying
$$\2n\ba{ll}
\ds|\wt g(s,0,0)|=|g(t,s,y(s),0)|\les L_0(t,s)+L_y(t,s)|y(s)|,\qq s\in(t,T],\\
\ns\ds|\wt g(s,\eta_1,\z_1)-\wt g(s,\eta_2,\z_2)|\les L_z(t,s)|\z_1-\z_2|,\qq
s\in(t,T],~\eta_i,\z_i\in\dbR^n,~i=1,2.\ea$$
Thus,
$$\ba{ll}
\ns\ds\dbE\(\int_t^T|\wt g(s,0,0)|ds\)^2\les2\dbE\[\(\int_t^TL_0(t,s)ds\)^2+\(\int_t^TL_y(t,s)|y(s)|ds\)^2\]\\
\ns\ds\q\les2\dbE\[\(\int_t^T\3n L_0(t,s)ds\)^2\3n+\1n\(\int_t^T\3n L_y(t,s)^2ds\)
\(\int_t^T\3n|y(s)|^2ds
\)\]<\infty.\ea$$
Then, BSDE \rf{BSVIE2} admits a unique adapted solution $(\eta(t,\cd),\z(t,\cd))
\in L^2_\dbF(\Om;C([t,T];\dbR^n))\times L^2_\dbF(t,T;\dbR^n)$ and
the following estimate holds:
$$\dbE\[\sup_{r\in[t,T]}|\eta(t,r)|^2+\int_t^T|\z(t,s)|^2ds\]
\les K\dbE\[|\psi(t)|^2+\(\int_t^T|g(t,s,y(s),0)|ds\)^2\].$$
Set
$$Y(t)=\eta(t,t),\q t\in[0,T],\qq Z(t,s)=\z(t,s),\q(t,s)\in\D[0,T].$$
Then
$$Y(t)=\psi(t)+\int_t^Tg(t,s,y(s),Z(t,s))ds-\int_t^TZ(t,s)dW(s),
\qq\ae\;t\in[0,T],$$
and
$$\ba{ll}
\ns\ds\dbE\[|Y(t)|^2+\int_t^T|Z(t,s)|^2ds\]\les K\dbE\[|\psi(t)|^2+\(\int_t^T|
g(t,s,y(s),0)|ds\)^2\]\\
\ns\ds\q\les K\dbE\Big\{|\psi(t)|^2+\[\int_t^T\(L_0(t,s)+L_y(t,s)|y(s)|\)ds\]^2\Big\},\qq\ae\;t\in[0,T].\ea$$
Consequently,
$$\ba{ll}
\ns\ds\dbE\[\int_0^T|Y(t)|^2dt+\int_0^T\int_t^T|Z(t,s)|^2dsdt\]\\
\ns\ds\q\les K\dbE\[\int_0^T|\psi(t)|^2dt+\int_0^T\(\int_t^TL_0(t,s)ds\)^2dt
+\int_0^T\int_t^TL_y(t,s)^2ds\int_t^T|y(s)|^2dsdt\].\ea$$
This means that the map $(y(\cd),z(\cd\,,\cd))\mapsto(Y(\cd),Z(\cd\,,\cd))$ is well-defined
from $\sH_\D^2[0,T]$ to itself.

\ms

Now, for given $(y_i(\cd),z_i(\cd\,,\cd))\in\cH_\D^2[0,T]$, let $(\eta_i(t,\cd),\z_i(t,
\cd))$ be the corresponding adapted solution to the BSDE \rf{BSVIE2}. Then the following stability estimate holds:
$$\ba{ll}
\ns\ds\dbE\[\sup_{r\in[t,T]}|\eta_1(t,r)-\eta_2(t,r)|^2+\int_t^T|\z_1(t,s)
-\z_2(t,s)|^2ds\]\\
\ns\ds\q\les K\dbE\(\int_t^T|g(t,s,y_1(s),\z_1(t,s))-g(t,s,y_2(s),\z_1(t,s))|ds\)^2.\ea$$
Hence, by setting
$$Y_i(t)=\eta_i(t,t),\q Z_i(t,s)=\z_i(t,s),\q(t,s)\in\D[0,T],$$
one has
$$\ba{ll}
\ns\ds\dbE\[|Y_1(t)-Y_2(t)|^2+\int_t^T|Z_1(t,s)-Z_2(t,s)|^2ds\]\\
\ns\ds\q\les K\dbE\(\int_t^T|g(t,s,y_1(s),\z_1(t,s))-g(t,s,y_2(s),\z_1(t,s))|ds\)^2\\
\ns\ds\q\les K\dbE\(\int_t^TL_y(t,s)|y_1(s)-y_2(s)|ds\)^2,\qq\ae\;t\in[0,T].\ea$$
Then for $\d\in(0,T)$, one has
$$\ba{ll}
\ns\ds\dbE\[\int_{T-\d}^T|Y_1(t)-Y_2(t)|^2dt+\int_{T-\d}^T\int_t^T|Z_1(t,s)
-Z_2(t,s)|^2dsdt\]\\
\ns\ds\q\les K\dbE\[\int_{T-\d}^T\(\int_t^TL_y(t,s)^2ds\)\(\int_t^T|y_1(s)-y_2(s)|^2ds\)dt\].\ea$$
This gives that $(y(\cd),z(\cd\,,\cd))\mapsto(Y(\cd),Z(\cd\,,\cd))$ is a contraction
on $\sH_\D^2[T-\d,T]$, for $\d>0$ small enogh. Hence, there exists a unique fixed point on
$\D[T-\d,T]$. This determines the values of $(Y(\cd),Z(\cd\,,\cd))$ as follows:
\bel{YZ}Y(t),\q t\in[T-\d,T],\qq Z(t,s),\q T-\d\les t<s\les T,\ee
where the domain for $Z(\cd\,,\cd)$ is the region $\textcircled{1}$ indicated in the following figure.

\vskip-1cm

\setlength{\unitlength}{.01in}
~~~~~~~~~~~~~~~~~~~~~~~~~~~~~~~~~~~~~~~~~~~~\begin{picture}(230,240)
\put(0,0){\vector(1,0){170}}
\put(0,0){\vector(0,1){170}}
\put(100,100){\line(0,1){50}}
\put(150,0){\line(0,1){150}}
\put(0,100){\line(1,0){100}}
\put(0,150){\line(1,0){150}}
\thicklines
\put(0,0){\color{red}\line(1,1){150}}
\put(120,140){\makebox(0,0){$\textcircled{1}$}}
\put(50,120){\makebox(0,0){$\textcircled{2}$}}
\put(-10,150){\makebox(0,0)[b]{$T$}}
\put(150,-15){\makebox(0,0)[b]{$T$}}
\put(-25,95){\makebox(0,0)[b]{$T-\d$}}
\put(100,-15){\makebox(0,0)[b]{$T-\d$}}
\put(180,-5){\makebox{$t$}}
\put(0,180){\makebox{$s$}}
\end{picture}

\bs

%
\no For $t\in[0,T-\d]$, we consider
\bel{BSVIE4} Y(t)=\psi^\d(t)+\int_t^{T-\d}g(t,s,Y(s),Z(t,s))ds-\int_t^{T-\d}Z(t,s)dW(s),\q t\in[0,T-\d],\ee
where
\bel{F}\psi^\d(t)=\psi(t)+\int_{T-\d}^Tg(t,s,\bar Y(s),Z(t,s))ds-\int_{T-\d}^TZ(t,s)dW(s),\qq t\in[0,T-\d].\ee
Here, we use $\bar Y(\cd)$ to indicate the already determined $Y(\cd)$ (as in \rf{YZ}), and (in the region $\textcircled{2}$ of the above figure)
$Z(\cd\,,\cd)$ is still unknown yet. Thus, \rf{F} is actually a stochastic Fredholm integral equation for $(\psi^\d(\cd),Z(\cd\,,\cd))$ and we need to find $\psi^\d(\cd)$ which is $\sF_{T-\d}$-measurable so that \rf{BSVIE4} is a BSVIE on $[0,T-\d]$. To solve \rf{F}, we introduce the following BSDE:
\bel{}\eta(t,r)=\psi(t)+\int_r^Tg(t,s,\bar Y(s),\z(t,s))ds-\int_r^T\z(t,s)dW(s),\qq r\in[t\vee(T-\d),T].\ee
This is very similar to \rf{BSVIE2}, and it admits a unique adapted solution $(\eta(t,\cd),\z(t,\cd))$. Now, we set
$$\psi^\d(t)=\eta(t,T-\d),\q Z(t,s)=\z(t,s),\qq(t,s)\in[0,T-\d]\times[T-\d,T].$$
Then, $\psi^\d(t)$ is $\sF_{T-\d}$-measurable and $Z(t,s)$ is determined for $(t,s)
\in[0,T-\d]\times[T-\d,T]$ (see region $\textcircled{2}$ indicated in the above figure).
Since $g(\cd)$ is $\dbF$-adapted, \rf{BSVIE4} is a BSVIE on $[0,T-\d]$ with an
$\dbF$-adapted generator. Then by induction, we can solve \rf{BSVIE-II}
over $[T-\d,T]$, over $[T-2\d,T-\d]$, etc., and eventually over $[0,T]$,
with the adapted solution $(Y(\cd),Z(\cd\,,\cd))$ in the space $\sH_\D^2[0,T]$.

\ms

However, such an approach does not work for the case that $g(\cd)$ is merely $\sF_T$-measurable, not $\dbF$-adapted.
For such a case, $s\mapsto g(t,s,y,z)$ is not necessarily $\sF_{T-\d}$-measurable
and thus \rf{BSVIE4} is not a BSVIE on $[0,T-\d]$. As a matter of fact, since the sum
of the first two terms on the right-hand of \rf{BSVIE4} is $\sF_T$-measurable in
general, the last term on the right-hand side which is an It\^o's integral over
$[0,T-\d]$ will not be enough to make the sum of the three terms on the right-hand
side $\dbF$-adapted. Hence, the approach for BSVIEs with $\dbF$-adapted generators
does not work directly for BSVISs with $\sF_T$-measurable generators;
in particular, unlike the adapted generator cases, solving stochastic Fredholm integral equation \rf{F} is not helpful.

\section{BSVIEs with Anticipating Generators}\label{Sec:Well-posedness}

We now would like to improve the above approach  and introduce some additional
conditions so that the well-posedness of BSVIEs with suitable anticipating
generators can be established.
It turns out that the main part of this approach for Type-I and Type-II BSVIEs are essentially the same. Therefore, we directly discuss the case of Type-II BSVIEs, indicating some differences in the process.

\ms

Let $g:\D[0,T]\times\dbR^n\times\dbR^n\times\dbR^n\times\Om\to\dbR^n$ be a generator of a Type-II BSVIE, which is $\sF_T$-measurable, i.e., the map $s\mapsto g(t,s,y,z,\h z)$ is $\cB([t,T])\otimes\cF_T$-measurable. We introduce the following:
\bel{decomposition}\ba{ll}
\ns\ds g_1(t,s,y,z,\h z)=\dbE_s[g(t,s,y,z,\h z)],\\
\ns\ds g_0(t,s,y,z,\h z)=g(t,s,y,z,\h z)-g_1(t,s,y,z,\h z),\\
\ns\ds\qq\q \forall(t,s,y,z,\h z)\in\D[0,T]\times\dbR^n\times\dbR^n\times\dbR^n.
\ea\ee
We can show, by using Lemma \ref{7.4} in the appendix, that one can take $g_1:\D[0,T]\times\dbR^n\times\dbR^n\times\dbR^n
\times\Om\to\dbR^n$ as an adapted generator. In other
words, from now on, we may assume that the following decomposition holds:
\bel{g=g_0+g_1}g(t,s,y,z,\h z)=g_0(t,s,y,z,\h z)+g_1(t,s,y,z,\h z),
\q\forall(t,s,y,z,\h z)\in\D[0,T]\times\dbR^n\times\dbR^n\times\dbR^n,~\as,\ee
where $g_1(\cd)$ is $\dbF$-adapted and $g_0(\cd)$ is $\sF_T$-measurable.
In the case that the generator is $\dbF$-adapted, $g_0(\cd)$ vanishes.

\ms

Let us rewrite our Type-II BSVIE as follows:
\begin{align}
Y(t)&=\psi(t)+\int_t^Tg_0(t,s,Y(s),Z(t,s),Z(s,t))ds+\int_t^Tg_1(t,s,Y(s),Z(t,s),Z(s,t))ds\nn\\
&\q-\int_t^TZ(t,s)dW(s),\qq t\in[0,T].\label{ABSVIE01}
\end{align}
The main idea is that we treat the sum of the first two terms on the right-hand side of \rf{ABSVIE01} as the free term. Then by some improved estimates for BSVIEs with adapted generators, we establish the well-posedness of BSVIEs with anticipating generators.

\ms

We now introduce the following hypothesis which is needed for Type-II BSVIEs.

\ms

{\bf(H3)$_p$} Let $g_0,g_1:\D[0,T]\times\dbR^n\times\dbR^n\times\dbR^n\times\Om\to\dbR^n$ be
$\sB(\D[0,T]\times\dbR^n\times\dbR^n)\otimes\sF_T$-measurable. For any $(t,y,z,\h z)\in[0,T]\times\dbR^n\times\dbR^n\times\dbR^n$, $s\mapsto g_1(t,s,y,z,\h z)$ is $\dbF$-progressively measurable. There are deterministic functions $L^i_y,L^i_z:
\D[0,T]\to[0,\infty)$, $i=0,1$, with the properties that
\bel{3.3}\ba{ll}
\ns\ds\int_0^T\(\int_t^TL^i_y(t,s)^{(p\land2)(1+\e)\over (p\land2)-1}ds\)^{(p\land2)-1}dt+\sup_{t\in[0,T]}\(\int_t^TL^i_z(t,s)^2ds\)\\
\ns\ds\qq\qq+\sup_{t\in[0,T]}
\(\int_t^TL^i_{\h z}(t,s)^{(p\land2)(1+\e)\over(p\land2)-1}ds\)<\infty,\qq i=0,1,\ea\ee
for some $\e>0$, and with the constant $K_p^0>0$ appearing in \rf{9.13},
\bel{K<1}K^0_p\sup_{t\in[0,T]}\(\int_t^TL^0_z(t,s)^2ds\)^{p\over2}<1,\ee
such that
\begin{align}
|g_i(t,s,y_0,z_0,\h z_0)-g_i(t,s,y_1,z_1,\h z_1)|\les L^i_y(t,s)|y_0-y_1|
\1n+\1n L^i_z(t,s)|z_0-z_1|\1n+\1n L_{\h z}^i(t,s)|\h z_0-\h z_1|,\nn\\
(t,s)\in\D[0,T],~y_j,z_j,\h z_j\in\dbR^n,~i,j=0,1,\label{|g-g|}
\end{align}
and
\bel{|g|}\ba{ll}
\ns\ds|g_i(t,s,y,z,\h z)|\1n\les\1n L^i_0(t,s)\1n+\1n L_y^i(t,s)|y|\1n+\1n L_z^i(t,s)|z|\1n+\1n L^i_{\h z}(t,s)|\h z|^{1\wedge{2\over p}},\q(t,s)\in\D[0,T],~y,z,\h z\in\dbR^n,\ea\ee
where
\bel{L_0}L^i_0(\cd\,,\cd)=|g_i(\cd\,,\cd\,,0,0,0)|,\qq\dbE\int_0^T\(\int_t^TL^i_0(t,s)
ds\)^pdt<\infty,\qq i=0,1.\ee

\ms

Note that condition \rf{K<1} means that the dependence $z\mapsto g_0(t,s,y,z,\h z)$ is well-controlled.
Since in the appendix, we have precise estimate for the constant $K_p^0$, this condition just gives a size restriction of $L_z^0(\cd\,,\cd)$, which need not be sufficiently small. This condition will play a crucial role below. We believe that a restriction on the dependence of $z\mapsto g_0(t,s,y,z,\h z)$ is somewhat necessary;
also see \autoref{remark-fbsde} for a comment on this condition from the viewpoint of FBSDEs.
However, at the moment, we can not provide satisfying evidence. Hopefully, we can address this point in our future research.

\ms

For Type-I BSVIEs, we impose the following  hypothesis.

\bs

{\bf(H3)$_p'$ \rm In (H3)$_p$, $L_{\h z}^i(\cd\,,\cd)=0$ and \rf{3.3} is replaced by the following:
\bel{3.3*}\ba{ll}
\ns\ds\int_0^T\(\int_t^TL^i_y(t,s)^{p(1+\e)\over p-1}ds\)^{p-1}dt+\sup_{t\in[0,T]}\(\int_t^TL^i_z(t,s)^{2}ds\)<\infty,\q i=0,1.\ea\ee

\ms

Note that for $p>2$, one has ${p\over p-1}<2$. Thus, for such a case, condition \rf{3.3} is stronger than \rf{3.3*} for $L_y(\cd\,,\cd)$.
We also note that in the case $g_0(\cd)$ being independent of $z$, condition \rf{K<1} is
automatically true. In particular, such a condition holds true
if $g_0(\cd)$ is identically equal to zero (or the generator of BSVIE is adapted). In what follows, we denote
$$\left\{\2n\ba{ll}
\ds L_0(t,s)=|g_0(t,s,0,0,0)|+|g_1(t,s,0,0,0)|,\\
\ns\ds L_y(t,s)=L_y^0(t,s)+L^1_y(t,s),\q L_{\h z}(t,s)=L^0_{\h z}(t,s)+L^0_{\h z}(t,s),\ea\qq(t,s)\in\D[0,T].\right.$$

\bt{Well-posed-ABSVIE} \sl Let {\rm(H3)$_p$} hold. Then, for any $\psi(\cd)\in
L^p_{\sF_T}(\Om;\dbR^n)$, Type-II BSVIE \rf{ABSVIE01} admits a unique adapted M-solution $(Y(\cd),Z(\cd\,,\cd))\in\sM^p[0,T]$. Moreover,
\bel{BSVIE-estimate1}\dbE\[\int_0^T|Y(t)|^pdt+\int_0^T\(\int_t^T|Z(t,s)|^2
ds\)^{p\over2}dt\]\les K\dbE\[\int_0^T|\psi(t)|^pdt+\int_0^T\(\int_t^TL_0(t,s)ds
\)^pdt\].\ee
If $\bar g(\cd\,,\cd\,,\cd\,,\cd\,,\cd)$ is another generator satisfying {\rm(H3)$_p$}, $\bar\psi(\cd)\in L^p_{\cF_T}(\Om;\dbR^n)$, and $\bar Y(\cd),\bar Z(\cd\,,\cd))\in\cM^p[0,T]$ is the unique adapted M-solution to the corresponding BSVIE, then
\begin{align}
&\dbE\[\int_0^T|Y(t)-\bar Y(t)|^pdt+\int_0^T\(\int_t^T|Z(t,s)-\bar Z(t,s)|^2ds\)^{p\over2}dt\les\1n K\dbE\[\1n\int_0^T\3n|\psi(t)\1n-\1n\bar\psi(t)|^pdt\nn\\
&\qq+\2n\int_0^T\3n\1n\(\1n\int_t^T\3n
|g(t,s,Y(s),Z(t,s),Z(s,t))\1n-\1n\bar g(t,s,Y(s),Z(t,s),Z(s,t))|ds\)^p\1n dt\].\label{BSVIE-estimate2}
\end{align}
\et

\begin{proof} Let $(y(\cd),z(\cd\,,\cd))\in\sM^p[0,T]$. We consider the following Type-I BSVIE:
\begin{align}
 Y(t)&=\psi(t)+\int_t^T\[g_0(t,s,y(s),z(t,s),z(s,t))+g_1(t,s,y(s),0,z(s,t))\]ds\nn\\
&\q+\int_t^T\[g_1(t,s,y(s),Z(t,s),z(s,t))-g_1(t,s,y(s),0,z(s,t))\]ds\nn\\
&\q-\int_t^TZ(t,s)dW(s),\qq t\in[0,T].\label{ABSVIE1*}
\end{align}
The generator is
$$\wt g(t,s,\z)=g_1(t,s,y(s),\z,z(s,t))-g_1(t,s,y(s),0,z(s,t)),\qq(t,s,\z)\in
\D[0,T]\times\dbR^n,$$
with the free term
$$\wt\psi(t)=\psi(t)+\int_t^T\[g_0(t,s,y(s),z(t,s),z(s,t))+g_1(t,s,y(s),0,z(s,t))\]ds.$$
Clearly,
$$\ba{ll}
\ns\ds\wt g(t,s,0)=0,\qq(t,s)\in\D[0,T],\\
\ns\ds|\wt g(t,s,\z_1)-\wt g(t,s,\z_2)|\les L_z^1(t,s)|\z_1-\z_2|,\q(t,s)\in\D[0,T],~\z_1,\z_2\in\dbR^n.\ea$$
Thus, BSVIE \rf{ABSVIE1*} admits a unique adapted solution $(Y(\cd),Z(\cd\,,\cd))\in\cH^p_\D[0,T]$. We naturally extend $Z(\cd\,,\cd)$ from $\D[0,T]$ to $[0,T]^2$ via the following
\bel{M}Y(t)=\dbE [Y(t)]+\int_0^tZ(t,s)dW(s),\qq t\in[0,T].\ee
Then, by \rf{H-M} we have $(Y(\cd),Z(\cd\,,\cd))\in\cM^p[0,T]$.
This means a solution map $(y(\cd),z(\cd\,,\cd))\mapsto(Y(\cd),Z(\cd\,,\cd))$ is well-defined from $\cM^p[0,T]$ to itself.

\ms

Now, let $(y_i(\cd),z_i(\cd\,,\cd))\in\cM^p[0,T]$ and let $(Y_i(\cd),Z_i(\cd\,,\cd))\in\cM^p[0,T]$ be the adapted M-solution to
BSVIE \rf{ABSVIE1*} with $(y(\cd),z(\cd\,,\cd))$ replaced by $(y_i(\cd),z_i(\cd\,,\cd))$. Then
\begin{align}
 Y_1(t)-Y_2(t)&=\int_t^T\[g_0(t,s,y_1(s),z_1(t,s),z_1(s,t))-g_0(t,s,y_2(s),z_2(t,s),
z_2(s,t))\nn\\
&\qq\qq+g_1(t,s,y_1(s),Z_1(t,s),z_1(s,t))-g_1(t,s,y_2(s),Z_1(t,s),
z_2(s,t))\]ds\nn\\
&\q+\int_t^TG(t,s)[Z_1(t,s)-Z_2(t,s)]ds-\int_t^T[Z_1(t,s)-Z_2(t,s)]dW(s),\q t\in[0,T],\label{ABSVIE2}
\end{align}
where
$$\ba{ll}
\ns\ds G(t,s)={1\over|Z_1(t,s)-Z_2(t,s)|^2}\[g_1(t,s,y_2(s),Z_1(t,s),z_2(s,t))\\
\ns\ds\qq\qq-g_1(t,s,y_2(s),Z_2(t,s),z_2(s,t))\][Z_1(t,s)-Z_2(t,s)]^\top{\bf1}_{\{Z_1(t,s)
\ne Z_2(t,s)\}}.\ea$$
Clearly,
$$|G(t,s)|\les L_z^1(t,s),\qq(t,s)\in\D[0,T].$$
By \autoref{simple BSVIE}, we see that BSVIE \rf{ABSVIE2} admits a unique adapted solution
$(Y_1(\cd)-Y_2(\cd),Z_1(\cd\,,\cd)-Z_2(\cd\,,\cd))\in\cM^p[0,T]$. Moreover,
$$\ba{ll}
\ns\ds\dbE|Y_1(t)-Y_2(t)|^p+\dbE\(\int_t^T|Z_1(t,s)-Z_2(t,s)|^2ds\)^{p\over2}\\
\ns\ds\q\les K_p^0\dbE\Big|\int_t^T\big[g_0(t,s,y_1(s),z_1(t,s),z_1(s,t))-g_0(t,s,y_2(s),z_2(t,s),
z_2(s,t))\\
\ns\ds\qq\qq\qq+g_1(t,s,y_1(s),Z_1(t,s),z_1(s,t))-g_1(t,s,y_2(s),Z_1(t,s),z_2(s,t))\big]ds\Big|^p\\
\ns\ds\q\les K_p^0\dbE\(\int_t^T\big[L_y(t,s)|y_1(s)-y_2(s)|+L_z^0(t,s)|z_1(t,s)-z_2(t,s)|
+L_{\h z}(t,s)|z_1(s,t)-z_2(s,t)|\big]ds\)^p\\
\ns\ds\q\les(1+\e_0)K_p^0\dbE\(\int_t^TL^0_z(t,s)|z_1(t,s)-z_2(t,s)|ds\)^p+K\dbE\(
\int_t^TL_y(t,s)|y_1(s)-y_2(s)|ds\)^p\\
\ns\ds\qq+K\dbE\(\int_t^TL_{\h z}(t,s)|z_1(t,s)
-z_2(t,s)|ds\)^p,\qq\ae\;t\in[0.T].\ea$$
Now, for any $\beta>0$, from the above, we have
$$\ba{ll}
\ns\ds\dbE\[\int_0^Te^{\beta pt}|Y_1(t)-Y_2(t)|^pdt+\int_0^Te^{\beta pt}\(\int_t^T|Z_1(t,s)-Z_2(t,s)|^2ds\)^{p\over2}dt\]\\
\ns\ds\q\les(1+\e_0)K_p^0\dbE\int_0^Te^{\beta pt}\(\int_t^TL^0_z(t,s)|z_1(t,s)-z_2(t,s)|ds\)^pdt\\
\ns\ds\qq+K\dbE\int_0^Te^{\beta pt}\(\int_t^TL_y(t,s)|y_1(s)-y_2(s)|ds\)^pdt\\
\ns\ds\qq+K\dbE\int_0^Te^{\beta pt}\(\int_t^TL_{\h z}(t,s)|z_1(s,t)-z_2(s,t)|ds\)^pdt.\ea$$
Note that
$$\ba{ll}
\ns\ds\dbE\int_0^Te^{\beta pt}\(\int_t^TL^0_z(t,s)|z_1(t,s)-z_2(t,s)|ds\)^pdt\\
\ns\ds\q=\dbE\int_0^T\(\int_t^TL^0_z(t,s)^2ds\)^{p\over2}e^{\beta pt}\(\int_t^T|z_1(t,s)-z_2(t,s)|^2ds\)^{p\over2}dt\\
\ns\ds\q\les\[\sup_{t\in[0,T]}\(\int_t^TL^0_z(t,s)^2ds\)^{p\over2}\]\dbE\int_0^Te^{\beta pt}\(\int_t^T|z_1(t,s)-z_2(t,s)|^2ds\)^{p\over2}dt,\ea$$
and
$$\ba{ll}
\ns\ds\dbE\int_0^Te^{\beta pt}\(\int_t^TL_y(t,s)|y_1(s)-y_2(s)|ds\)^pdt\\
\ns\ds\q\les\dbE\int_0^T\(\int_t^Te^{-{\beta p(s-t)\over p-1}}L_y(t,s)^{p\over p-1}ds\)^{p-1}\(\int_t^Te^{\beta p s}|y_1(s)-y_2(s)|^pds\)dt\\
\ns\ds\q\les\int_0^T\(\int_t^Te^{-{\beta p(1+\e)(s-t)\over (p-1)\e}}ds\)^{(p-1)\e\over1+\e}\(\int_t^TL_y(t,s)^{p(1+\e)\over p-1}ds\)^{p-1\over1+\e}dt\,
\dbE\int_0^Te^{\beta p s}|y_1(s)-y_2(s)|^pds\\
\ns\ds\q\les\({(p-1)\e\over\beta p(1+\e)}\)^{(p-1)\e\over1+\e}\int_0^T\(\int_t^TL_y(t,s)^{p(1+\e)\over p-1}ds\)^{p-1\over1+\e}dt\, \dbE\int_0^Te^{\beta p s}|y_1(s)-y_2(s)|^pds.\ea$$
Also, since
$$y_1(t)-y_2(t)=\dbE[y_1(t)-y_2(t)]+\int_0^t[z_1(t,s)-z_2(t,s)]dW(s),\q t\in[0,T],$$
one has
$$\dbE\(\int_0^t|z_1(t,s)-z_2(t,s)|^2ds\)^{p\over2}\les K_p\dbE\Big|\int_0^t[z_1(t,s)-z_2(t,s)]dW(s)\Big|^p\les2^{p-1}K_p\dbE|y_1(t)-y_2(t)|^p.$$
Therefore, when $p\in(1,2]$,
\begin{align*}
&\dbE\int_0^Te^{\beta pt}\(\int_t^TL_{\h z}(t,s)|z_1(s,t)-z_2(s,t)|ds\)^pdt\\
&\q=\dbE\int_0^T\(\int_t^Te^{\beta(t-s)}L_{\h z}(t,s)e^{\beta s}|z_1(s,t)-z_2(s,t)|ds\)^pdt\\
&\q\les\dbE\int_0^T\(\int_t^Te^{{\beta p\over p-1}(t-s)}L_{\h z}(t,s)^{p\over p-1}ds\)^{p-1}\(\int_t^Te^{\beta ps}|z_1(s,t)-z_2(s,t)|^pds\)dt\\
&\q\les\dbE\int_0^T\(\int_t^Te^{{\beta p(1+\e)\over(p-1)\e}(t-s)}ds\)^{(p-1)\e\over1+\e}\(\int_t^TL_{\h z}(t,s)^{p(1+\e)\over (p-1)}ds\)^{(p-1)\over1+\e}\(\int_t^Te^{\beta ps}|z_1(s,t)-z_2(s,t)|^pds\)dt\\
&\q\les\({(p-1)\e\over\beta p(1+\e)}\)^{(p-1)\e\over1+\e}
\[\sup_{t\in[0,T]}\(\int_t^TL_{\h z}(t,s)^{p(1+\e)\over(p-1)}ds\)^{(p-1)\over1+\e}\]
\dbE\int_0^T\int_t^Te^{\beta ps}|z_1(s,t)-z_2(s,t)|^pdsdt\\
&\q\les\({(p-1)\e\over\beta p(1+\e)}\)^{(p-1)\e\over1+\e}
\[\sup_{t\in[0,T]}\(\int_t^TL_{\h z}(t,s)^{p(1+\e)\over(p-1)}ds\)^{(p-1)\over1+\e}\]\dbE\int_0^Te^{\beta pt}
\int_0^t|z_1(t,s)-z_2(t,s)|^pdsdt\\
&\q\les\({(p-1)\e\over\beta p(1+\e)}\)^{(p-1)\e\over1+\e}T^{1-{p\over2}}
\[\sup_{t\in[0,T]}\(\int_t^TL_{\h z}(t,s)^{p(1+\e)\over(p-1)}ds\)^{(p-1)\over1+\e}\]\dbE\int_0^Te^{\beta pt}
\(\int_0^t|z_1(t,s)-z_2(t,s)|^2ds\)^{p\over2}dt\\
%
%
&\q\les\({(p-1)\e\over\beta p(1+\e)}\)^{(p-1)\e\over1+\e}T^{1-{p\over2}}
2^{p-1}K_p\[\sup_{t\in[0,T]}\(\int_t^TL_{\h z}(t,s)^{p(1+\e)\over(p-1)}ds\)^{(p-1)\over1+\e}\]\dbE\int_0^Te^{\beta pt}|y_1(t)-y_2(t)|^pdt.\end{align*}
Summarizing the above, we obtain that when $p\in(1,2]$, the following holds:
$$\ba{ll}
\ns\ds\dbE\[\int_0^Te^{\beta pt}|Y_1(t)-Y_2(t)|^pdt+\int_0^Te^{\beta pt}\(\int_t^T|Z_1(t,s)-Z_2(t,s)|^2ds\)^{p\over2}dt\]\\
\ns\ds\q\les(1+\e_0)K_p^0\[\sup_{t\in[0,T]}\(\int_t^TL^0_z(t,s)^2ds\)^{p\over2}\]\dbE
\int_0^Te^{\b pt}\(\int_t^T|z_1(t,s)-z_2(t,s)|^2ds\)^{p\over2}dt\\
\ns\ds\qq+\({(p-1)\e\over\beta p(1+\e)}\)^{(p-1)\e\over1+\e}K\[\int_0^T\(\int_t^TL_y(t,s)^{p(1+\e)\over p-1}ds\)^{p-1\over1+\e}dt\]\dbE\int_0^Te^{\beta p s}|y_1(s)-y_2(s)|^pds\\
\ns\ds\qq+\({(p-1)\e\over\beta p(1+\e)}\)^{(p-1)\e\over1+\e}T^{1-{p\over2}}K
\[\sup_{t\in[0,T]}\(\int_t^TL_{\h z}(t,s)^{p(1+\e)\over(p-1)}ds\)^{(p-1)\over1+\e}\]\dbE\int_0^Te^{\beta pt}|y_1(t)-y_2(t)|^pdt.\ea$$
If $L_{\h z}(\cd\,,\cd)=0$, i.e., our BSVIE is Type-I, then the above holds for any $p\in(1,\infty)$. In this two cases, under the crucial condition \rf{K<1},
for $\beta>0$ large enough, the map $(y(\cd),z(\cd\,,\cd))\mapsto(Y(\cd),Z(\cd\,,\cd))$ is a contraction on $\cH^{p,\beta}_\D[0,T]$ whose norm is given by
$$\|(Y(\cd),Z(\cd\,,\cd)\|_{\cH^{p,\beta}_\D[0,T]}\equiv\Big\{\dbE\[\int_0^Te^{\beta p t}|Y(t)|^pdt+\int_0^Te^{\beta pt}\(\int_t^T|Z(t,s)|^2ds\)^{p\over2}dt\]\Big\}^{1\over p}.$$
Clearly, this norm is equivalent to that of $\cH^p_\D[0,T]$. Hence, BSVIE \rf{ABSVIE01} admits a unique adapted solution $(Y(\cd),Z(\cd\,,\cd))\in\cH^p_\D[0,T]$; and by naturally extending $Z(\cd\,,\cd)$ from $\D[0,T]$ to $[0,T]^2$ as in \rf{M}, one has $(Y(\cd),Z(\cd\,,\cd))\in\cM^p[0,T]$.

\ms

Finally, if our BSVIE is of Type-II, and $p\in[2,\infty)$. For $\psi(\cd)\in L^p_{\cF_T}(0,T;\dbR^m)\subseteq
L^2_{\cF_T}(0,T;\dbR^m)$, thanks to \rf{3.3}, there exists a unique
adapted M-solution $(Y(\cd),Z(\cd\,,\cd))\in\cM^2[0,T]$. We want to
show that in the current case, $(Y(\cd),Z(\cd\,,\cd))$ is actually
in $\cM^p[0,T]$. To show this, for the obtained adapted M-solution
$(Y(\cd),Z(\cd\,,\cd))$, let us consider the following Type-I BSVIE for
$(\wt Y(\cd),\wt Z(\cd\,,\cd))$:
\bel{BSVIE-3.14}
\wt Y(t)=\psi(t)+\int_t^Tg\big(t,s,\wt Y(s),\wt Z(t,s),Z(s,t)\big)ds-\int_t^T\wt Z(t,s)dW(s),\qq t\in[0,T].\ee
Note that
$$\ba{ll}
\ds\dbE\int_0^T\Big(\int_t^T|g(t,s,0,0,Z(s,t))|ds\Big)^pdt\\
\ns\ds\q\les K\dbE\int_0^T\Big(\int_t^T\big[L_0(t,s)+L_{\h z}(t,s)|Z(s,t)|^{2\over p}\big]ds\Big)^pdt\\
\ds\q\les K\dbE\[\int_0^T\(\int_t^TL_0(t,s)ds\)^pdt+\int_0^T\(\int_t^TL_{\h z}(t,s)^{p\over p-1}ds\)^{p-1}\(\int_t^T|Z(s,t)|^2\)dsdt\]\\
\ns\ds\q\les K\dbE\[\int_0^T\(\int_t^TL_0(t,s)ds\)^pdt+\int_0^T\int_0^t|Z(t,s)|^2dsdt\]<\infty.\ea$$
Thus, by the results on Type-I BSVIEs, \rf{BSVIE-3.14} admits a unique adapted solution $(\wt Y(\cd),\wt Z(\cd\,,\cd))\in\cH^p_\D[0,T]$.
Now,  we extend $\wt Z(t,s)$ from $\D[0,T]$ to $[0,T]^2$ similarly to \rf{M}.
 It leads to $(\wt Y(\cd),\wt Z(\cd\,,\cd))\in\cM^p[0,T]\subseteq\cM^2[0,T]$, which is an adapted M-solution to \rf{BSVIE-3.14}. But, $(Y(\cd),Z(\cd\,,\cd))$ is already the unique adapted M-solution to \rf{BSVIE-3.14} in $\cM^2[0,T]$. Hence, it is necessary that
$$(Y(\cd),Z(\cd\,,\cd))=(\wt Y(\cd),\wt Z(\cd\,,\cd))\in\cM^p[0,T].$$
The stability estimates can be obtained similarly.
\end{proof}

\begin{remark}\rm
Let us revisit \autoref{eample1}, in which the following BSDE is considered:
\bel{BSDE-Example-revisit}
Y(t)=\int_t^TW(T)ds-\int_t^TZ(s)dW(s),\q t\in[0,T].
\ee
Denote
$$
\psi(t)=\int_t^TW(T)ds,\q t\in[0,T].
$$
Then we can rewrite \rf{BSDE-Example-revisit} as follows:
\bel{BSDE-Example-revisit1}
Y(t)=\psi(t)-\int_t^TZ(s)dW(s),\q t\in[0,T].
\ee
Note that the new free term $\psi(\cd)$ depends on $t$, though
the generator of \rf{BSDE-Example-revisit} is independent of $t$ and the free term of  \rf{BSDE-Example-revisit} equals zero.
Because of this feature, we should allow the unknown process $Z(t,s)$ to depend on the parameter $t$; that is
\bel{BSDE-Example-revisit2}
Y(t)=\int_t^TW(T)ds-\int_t^TZ(t,s)dW(s),\q t\in[0,T].
\ee
Naturally, the backward equation with the anticipating generator $W(T)$ should be studied  in the framework of BSVIEs,
rather than BSDEs.
\end{remark}

\section{Path-Dependent Type-I BSVIEs}\label{Sec:path-dependent-BSVIE}

In this section, we shall consider the following path-dependent Type-I BSVIE:
\bel{BSVIE-path}
Y(t)=\psi(t)+\int_t^T g(t,s,Y_s)ds-\int_t^TZ(t,s)dW(s),\q t\in[0,T],
\ee
where $Y_s$ denotes the path of $Y(\cd)$ from the current time $s$ to the terminal time $T$, that is
$$Y_s\equiv\{Y(r)\bigm|s\les r\les T\},\q\as$$
Denote $\dbY\deq\bigcup_{s\in[0,T]}\dbY_s$, where $\dbY_s\deq C([s,T];\dbR^n)$, equipped with the uniform norm:
$$\|\By_s\|_{\dbY_s}=\sup_{r\in[s,T]}|\By_s(r)|,\q\forall \By_s\in\dbY_s.$$

\ss
{\bf (H4)$_p$} The map $\psi:[0,T]\times\Om\to\dbR^n$ is $\cF_T$-measurable
and the map $g:\D[0,T]\times \dbY\times\Om\to\dbR^n$ is $\cB(\D[0,T]\times \dbY)\otimes\cF_T$-measurable satisfying:
\bel{}
g(t,s,{\bf y})=g(t,s,{\bf y}_s),\q \forall (t,s,{\bf y})\in\D[0,T]\times\dbY.
\ee
There exist a constant $L>0$, a modulus of continuous function $\rho(\cd)$ and a positive random variable $\xi\in L^p_{\cF_T}(\Om;\dbR_+)$ such that:
\begin{align}
\nn&|g(t,s,\By_s)-g( t,s,\By^\prime_s)|\les L\|\By_s-\By^\prime_s\|_{\dbY_s},\q \int_t^{t+\d}|g(t,s,{\bf 0})|ds\les |\xi|\rho(\d),\\
\nn&|g(t-\d,s,\By_s)-g( t,s,\By_s)|+|\psi(t-\d)-\psi(t)|\les (|\xi|+\|\By_s\|_{\dbY_s})\rho(\d),\\
&\qq\qq\qq\qq\qq\qq\qq\q \forall (t,s)\in\D[0,T],\, \By_s,\By_s^\prime\in\dbY_s,\, \d\in[0,t].
\end{align}
Moreover,
\bel{II}\dbE\[\sup_{t\in[0,T]}\Big|\int_t^T g(t,s,{\bf 0})ds\Big|^p+\sup_{t\in[0,T]}|\psi(t)|^p\]<\infty.\ee

\ms

\begin{theorem}\label{thm:well-posedness-A} \sl Let {\rm(H4)$_p$} hold with $p>1$.
Then BSVIE \rf{BSVIE-path} admits a unique adapted solution $(Y(\cd),Z(\cd\,,\cd))$
such that $Y(\cd)$ is continuous in $t$ and the following estimate holds true:
\bel{BSVIEest}
\dbE\Big[\sup_{0\les t\les T}|Y(t)|^p\Big]\1n+\2n\sup_{0\les t\les T}\dbE\(\int_t^T\3n |Z(t,s)|^2ds\)^{p\over 2}\1n\les\1n K\dbE\[\sup_{t\in[0,T]}\Big|\int_t^T\3n g(t,s,{\bf 0})ds\Big|^p\2n+\2n\sup_{t\in[0,T]}|\psi(t)|^p\].\ee

\end{theorem}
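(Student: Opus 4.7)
The plan is to establish existence and uniqueness via a Banach fixed-point argument in $\cS^p := L^p_\dbF(\Om; C([0,T]; \dbR^n))$, equipped with the equivalent weighted sup-norm $\|\By\|_\beta^p := \dbE[\sup_{t \in [0,T]} e^{\beta p t} |\By(t)|^p]$ for a parameter $\beta > 0$ to be chosen large (analogous to the weighting used in the proof of \autoref{Well-posed-ABSVIE}). The weighting makes the contraction hold on the entire interval $[0,T]$ in one step, avoiding any splitting-and-gluing procedure.

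First, for $\By \in \cS^p$, I would form the $\cF_T$-measurable process
$$\wt\f^\By(t) := \psi(t) + \int_t^T g(t,s,\By_s)\,ds, \qq t \in [0,T],$$
which by the Lipschitz and growth bounds in (H4)$_p$ together with \rf{II} satisfies $\dbE[\sup_t|\wt\f^\By(t)|^p] < \infty$. Then set $Y(t) := \dbE[\wt\f^\By(t) \,|\, \cF_t]$ and recover $Z(t,\cd)$ via the Brownian martingale representation of $r \mapsto \dbE[\wt\f^\By(t)|\cF_r]$ on $[t,T]$; this defines the candidate solution map $\Gamma: \By \mapsto Y$. Doob's $L^p$-maximal inequality immediately yields $\dbE[\sup_t|Y(t)|^p] < \infty$.

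The principal technical obstacle is to show that $Y$ admits a continuous-in-$t$ modification, so that $\Gamma$ indeed takes $\cS^p$ into itself. From (H4)$_p$ one has the a.s.\ modulus estimate
$$|\wt\f^\By(t) - \wt\f^\By(t')| \le C\, \rho(|t - t'|)\,\Big(|\xi| + \sup_{r \in [0,T]}|\By(r)|\Big),$$
so $\wt\f^\By(\cd)$ has uniformly continuous paths. Splitting
$$Y(t) - Y(t') = \dbE[\wt\f^\By(t) - \wt\f^\By(t') \,|\, \cF_t] + \(\dbE[\wt\f^\By(t')|\cF_t] - \dbE[\wt\f^\By(t')|\cF_{t'}]\),$$
the first term inherits the $\rho$-modulus of $\wt\f^\By$ via conditional expectation, while the second is the oscillation at $r = t'$ of the right-continuous martingale $r \mapsto \dbE[\wt\f^\By(t')|\cF_r]$, which vanishes in $L^p$ by L\'evy's martingale convergence theorem. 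Combining these estimates uniformly in $(t,t')$ and passing to a limit along a countable dense grid in $[0,T]$ produces the desired continuous modification of $Y$. This is where the $\rho$-regularity of $\psi$ and $g$ in $t$, built into (H4)$_p$, is essentially used; the rest of the argument is robust.

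Granted $\Gamma : \cS^p \to \cS^p$, the contraction follows easily. For $\By, \By' \in \cS^p$ with $Y = \Gamma\By$ and $Y' = \Gamma\By'$, the Lipschitz condition gives
$$|Y(t) - Y'(t)| \le L\, \dbE\Big[\int_t^T \sup_{r \in [s,T]} |\By(r) - \By'(r)|\, ds \,\Big|\, \cF_t\Big].$$
Using $e^{\beta s}\sup_{r \in [s,T]}|\By(r)-\By'(r)| \le \sup_{r \in [0,T]} e^{\beta r}|\By(r)-\By'(r)|$ and $\int_t^T e^{\beta(t-s)}\,ds \le 1/\beta$, then applying Doob's $L^p$-maximal inequality to the resulting conditional expectation, I obtain
$$\|Y - Y'\|_\beta \le \frac{Lp}{\beta(p-1)}\, \|\By - \By'\|_\beta.$$
Choosing $\beta > Lp/(p-1)$ gives a strict contraction, whose unique fixed point together with its associated $Z$ is the desired adapted solution, automatically unique in $\cS^p$. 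Finally, the estimate \rf{BSVIEest} is obtained by repeating the same weighted computation at the fixed point with $\By' \equiv 0$ (bounding $\|Y\|_\beta$), combined with the Burkholder--Davis--Gundy inequality applied to $-\int_t^T Z(t,s)\,dW(s) = Y(t) - \wt\f^Y(t)$ to control $\sup_t \dbE\(\int_t^T|Z(t,s)|^2 ds\)^{p/2}$.
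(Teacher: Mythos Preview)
Your proposal is correct and takes a genuinely different route from the paper in the fixed-point step. The paper first isolates a ``frozen'' version of the equation as \autoref{Prop:BSVIE-no-y} (invoking \autoref{Well-posed-ABSVIE} for the $Z$-part and a two-parameter process $\eta(t,r)=\dbE_r[\wt\f^{y}(t)]$ for the continuity of $Y$), and then obtains the fixed point by a contraction on the short interval $[T-\d,T]$ followed by an inductive extension to $[T-2\d,T]$, etc. Your weighted-norm contraction on all of $[0,T]$ is cleaner: the bound $e^{\beta s}\sup_{r\ges s}|\By(r)-\By'(r)|\les\sup_r e^{\beta r}|\By(r)-\By'(r)|$ together with Doob's $L^p$-inequality does give the constant $Lp/[\beta(p-1)]$, so no splitting-and-gluing is needed. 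Your argument also bypasses the appeal to \autoref{Well-posed-ABSVIE}, since with the path frozen the generator has no $(Y,Z)$-dependence and the solution is simply $Y(t)=\dbE_t[\wt\f^\By(t)]$ plus the martingale representation.

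Two small points of presentation to tighten. First, the sentence ``Doob's $L^p$-maximal inequality immediately yields $\dbE[\sup_t|Y(t)|^p]<\infty$'' is not literally correct because $t\mapsto Y(t)=\dbE_t[\wt\f^\By(t)]$ is not a martingale; you should first dominate $|Y(t)|\les\dbE_t\big[\sup_s|\wt\f^\By(s)|\big]$ and apply Doob to the latter. Second, in the continuity step your term (II), the martingale increment $\dbE[\wt\f^\By(t')\,|\,\cF_t]-\dbE[\wt\f^\By(t')\,|\,\cF_{t'}]$, does not by itself carry a modulus uniform in $t'$; the countable-grid argument you allude to must therefore be carried out exactly as in the paper---namely, work with the two-parameter field $\eta(t,r)=\dbE_r[\wt\f^\By(t)]$, use term (I) to get $\sup_r|\eta(t_i,r)-\eta(t_j,r)|\les C(\om)\rho(|t_i-t_j|)$ on a full-measure set for rational $t_i,t_j$ (via Doob applied to $r\mapsto\dbE_r[|\xi|+\|\By\|_\dbY]$), combine with the $r$-continuity of each $\eta(t_i,\cd)$, and build the continuous modification $\bar\eta$ by $\limsup$ over rationals. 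The paper does this with an auxiliary exponent $p'\in(1,p)$, but your direct $p'=1$ bound $|\eta(t,r)-\eta(t',r)|\les C\rho(|t-t'|)\,\dbE_r[|\xi|+\|\By\|_\dbY]$ works equally well and is slightly simpler.
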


\ms
Note that the generator $g(\cd)$ of BSVIE \rf{BSVIE-path}
could depend on the whole pathes of the unknown process $Y(\cd)$ in the future,
due to which we would like to call \rf{BSVIE-path} a {\it path-dependent BSVIE}.
To our best knowledge, it is the first time to study the backward stochastic equations in a path-dependent framework. It is noteworthy that the difficulty caused by the  $\cF_T$-measurable generator is not avoidable here,
because for any $Y(\cd)\in L^p_{\dbF}(\Om;C([0,T];\dbR^n))$,
$g(t,s,Y_s)$ is still $\cF_T$-measurable even if $g(t,s,\By_s)$ is assumed to be $\cF_s$-measurable for
any deterministic function $\By_s\in C([s,T];\dbR^n)$.

%

\ms
A closely related work of path-dependent BSVIEs is the following so-called {\it anticipated BSDE} (ABSDE, for short), introduced by Peng--Yang \cite{Peng-Yang2009}:
\bel{ABSDE}\left\{\begin{aligned}
dY(t)&=-f(t,Y(t),Y(t+\d),Z(t),Z(t+\d))dt+Z(t)dW(t),\q t\in[0,T],\\
 Y(t)&=\eta(t),\q Z(t)=\zeta(t),\q t\in[T,T+\d],
\end{aligned}\right.\ee
where $0<\d<T$ is a fixed constant and $(\eta(\cd),\zeta(\cd))\in  L^2_{\dbF}(\Om;C([T,T+\d];\dbR^n))\times L^2_\dbF(T,T+\d;\dbR^n)$
is the given terminal state.  Along this line, some extensions of ABSDEs to BSVIEs (called {\it anticipated BSVIEs})
can be found in \cite{Wen-Shi2020,Hamaguchi2021}.
The main feature of \rf{ABSDE} is that the generator $f(\cd)$  at time $t$ depends on the values of the unknown processes
at the future time $t+\d$. Note from Peng--Yang  \cite{Peng-Yang2009} that in studying the well-posedness of ABSDEs, the following
{\it adaptedness condition} is assumed:
\bel{M-condition}
f(s,y(s),y(s+\d),z(s),z(s+\d))\in\cF_s,\q \forall (y(\cd),z(\cd))\in L^2_{\dbF}(\Om;C([0,T+\d];\dbR^n))\times L^2_\dbF(0,T+\d;\dbR^n).
\ee
To satisfy the adaptedness condition \rf{M-condition}, one normally needs to impose the following assumption:
\begin{align}
f(s,y(s),y(s+\d),z(s),z(s+\d))=\ti f(s,y(s),\dbE_s[y(s+\d)],z(s),\dbE_s[z(s+\d)]),\nn\\
\forall (y(\cd),z(\cd))\in L^2_{\dbF}(\Om;C([0,T+\d];\dbR^n))\times L^2_\dbF(0,T+\d;\dbR^n),\label{M-condition1}
\end{align}
for some progressively function $\ti f(\cd)$, from which we see that the anticipated terms $y(s+\d)$ and $z(s+\d)$
have been taken conditional expectation. Mathematically, it is not satisfying,
and  then naturally one hopes to remove this adaptedness condition of ABSDEs.
However, the following example shows that it cannot be achieved within the framework of ABSDEs.
The  feasible approach (see \autoref{thm:well-posedness-A}) is to generalize the framework to BSVIEs.
The main reason is that the generator of BSVIEs is allowed to be anticipating,
whereas for BSDEs, the generator must be adapted.

\begin{example}\rm
Consider the ABSDE:
\bel{example-2}
Y(t)=W(2)+\int_t^2 Y([s+1]\wedge 2)ds-\int_t^2 Z(s)dW(s),\q t\in[0,2].
\ee
Note that
$$
Y([t+1]\wedge 2)=Y(2)=W(2),\q t\in[1,2].
$$
Then on $[1,2]$,
$$
Y(t)=(3-t)W(2)-\int_t^2 Z(s)dW(s),\q t\in[1,2].
$$
Obviously,
$$
\cY(t)=(3-t)W(t),\q \cZ(t,s)=(3-t),\q (t,s)\in\D[1,2],
$$
is the unique solution to the following BSVIE:
$$
\cY(t)=(3-t)W(2)-\int_t^2\cZ(t,s)dW(s),\q t\in[1,2].
$$
Then by the argument in \autoref{eample1}, we obtain that ABSDE \rf{example-2}
dose not have an adapted solution on $[1,2]$.
\end{example}

Next, we are going to prove \autoref{thm:well-posedness-A}.
For any given $y(\cd)\in L_{\dbF}^p(\Om;C([0,T];\dbR^n))$, we consider the following BSVIE:
\bel{BSVIE-path-By}
Y(t)=\psi(t)+\int_t^T g(t,s,y_s)ds-\int_t^TZ(t,s)dW(s),\q t\in[0,T].
\ee
By  modifying the argument employed in \cite[Proposition 2.4]{Wang-Yong-Zhang2021},
we have the following $L_{\dbF}^p(\Om;C([0,T];\dbR^n))$-norm estimate for $Y(\cd)$.

\begin{proposition}\label{Prop:BSVIE-no-y}
Let {\rm(H4)$_p$} hold with $p>1$.
Then  BSVIE \rf{BSVIE-path-By} admits a unique adapted solution $(Y(\cd),Z(\cd,\cd))$
such that $Y(\cd)$ is continuous in $t$ and the following estimate holds true:
\begin{align}
&\dbE\Big[\sup_{0\les t\les T} |Y(t)|^p \Big] + \sup_{0\les t\les T}\dbE\(\int_t^T |Z(t,s)|^2ds\)^{p\over 2}\nn\\
&\q\les K\dbE\[\sup_{t\in[0,T]}\Big|\int_t^T\3n g(t,s, y_s)ds\Big|^p\2n+\2n\sup_{t\in[0,T]}|\psi(t)|^p\].
\label{BSVIE-est}
\end{align}
\end{proposition}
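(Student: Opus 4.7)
The plan is to exploit the crucial observation that the generator $g(t,s,y_s)$ in \rf{BSVIE-path-By} does not depend on the unknown $(Y,Z)$ at all; for each fixed $t\in[0,T]$ the equation is therefore equivalent to a martingale representation problem. Define the $\cF_T$-measurable random field
$$\xi(t)\deq\psi(t)+\int_t^T g(t,s,y_s)ds,\qq t\in[0,T].$$
Combining the Lipschitz bound $|g(t,s,y_s)|\les|g(t,s,\mathbf{0})|+L\|y_s\|_{\dbY_s}$ with \rf{II} and $y(\cd)\in L^p_\dbF(\Om;C([0,T];\dbR^n))$ shows that $\dbE\bigl[\sup_{t\in[0,T]}|\xi(t)|^p\bigr]<\infty$.

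For each $t$, write the BSVIE as $\xi(t)=Y(t)+\int_t^TZ(t,s)dW(s)$; conditioning on $\cF_t$ forces $Y(t)=\dbE[\xi(t)\mid\cF_t]$, and the martingale representation theorem on $[t,T]$ then produces a unique $Z(t,\cd)\in L^2_\dbF(t,T;\dbR^n)$. This determines $(Y(\cd),Z(\cd,\cd))$ and delivers uniqueness for free. For the a priori bounds I would use $|Y(t)|\les\dbE[\sup_s|\xi(s)|\mid\cF_t]$ and Doob's $L^p$ maximal inequality (applied to the martingale $r\mapsto\dbE[\sup_s|\xi(s)|\mid\cF_r]$) to conclude
$$\dbE\Bigl[\sup_{t\in[0,T]}|Y(t)|^p\Bigr]\les K\,\dbE\Bigl[\sup_{t\in[0,T]}|\xi(t)|^p\Bigr]\les K\,\dbE\Bigl[\sup_{t\in[0,T]}\Bigl|\int_t^T g(t,s,y_s)ds\Bigr|^p+\sup_{t\in[0,T]}|\psi(t)|^p\Bigr].$$
The $Z$-estimate then comes from the Burkholder--Davis--Gundy inequality applied to $\int_t^TZ(t,s)dW(s)=\xi(t)-Y(t)$, which gives $\dbE(\int_t^T|Z(t,s)|^2ds)^{p/2}\les K\dbE|\xi(t)|^p$ for each $t$; taking the supremum in $t$ (outside the expectation, as in \rf{BSVIE-est}) is harmless since the right-hand side is already controlled.

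The step I expect to be the main obstacle is producing a \emph{continuous} version of $t\mapsto Y(t)$, since both the integrand $\xi(t)$ and the conditioning $\sigma$-algebra $\cF_t$ vary with $t$. I would decompose, for $t<t'$,
$$Y(t')-Y(t)=\dbE[\xi(t')-\xi(t)\mid\cF_{t'}]+\bigl(\dbE[\xi(t)\mid\cF_{t'}]-\dbE[\xi(t)\mid\cF_t]\bigr).$$
The first summand is controlled through the modulus assumption in (H4)$_p$, which yields
$$|\xi(t')-\xi(t)|\les\bigl(|\xi|+\sup_s\|y_s\|_{\dbY_s}\bigr)\bigl(K\rho(t'-t)+L(t'-t)\bigr)\qq\hbox{a.s.},$$
so this piece tends to $0$ in $L^p$ uniformly in $t$ as $|t'-t|\to0$. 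The second summand is $M(t')-M(t)$ for the Brownian-filtration martingale $M(r)\deq\dbE[\xi(t)\mid\cF_r]$, which is a.s. continuous. Following the strategy in \cite[Proposition~2.4]{Wang-Yong-Zhang2021}, I would combine these controls with the modulus $\rho$ via a Kolmogorov-type criterion on a dense dyadic partition $\{t_k^n\}$ of $[0,T]$: first define $Y$ on the partition, bound the oscillation $\sup_k|Y(t_{k+1}^n)-Y(t_k^n)|$ by $\rho(2^{-n})$-weighted quantities, and pass to the continuous modification. The resulting pathwise continuous $Y(\cd)$ automatically satisfies \rf{BSVIE-est} by the estimate already established, which concludes the proof.
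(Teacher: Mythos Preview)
Your approach is essentially the paper's: both set $\xi(t)=\psi(t)+\int_t^Tg(t,s,y_s)ds$, recover $Y(t)=\dbE_t[\xi(t)]$ and $Z(t,\cd)$ by martingale representation, and get the a priori bounds from Doob/BDG. Your $Y$-estimate via $|Y(t)|\les\dbE_t[\sup_s|\xi(s)|]$ followed by Doob in $L^p$ is in fact slightly cleaner than the paper's route (which detours through an auxiliary exponent $p'<p$).

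The one place where your sketch is loose is continuity, and here the paper's argument is worth recording. The paper introduces explicitly the two-parameter process $\eta(t,r)=\dbE_r[\xi(t)]$; your decomposition $Y(t')-Y(t)=A+B$ is precisely $[\eta(t',t')-\eta(t,t')]+[\eta(t,t')-\eta(t,t)]$, so joint continuity of $\eta$ is exactly what you need. For the $r$-variable this is automatic (Brownian filtration martingale). For the $t$-variable the paper fixes $p'\in(1,p)$ and shows
\[
|\eta(t,r)-\eta(t',r)|^{p'}\les K\,\dbE_r\big[|\xi|^{p'}+\|y\|_{\dbY}^{p'}\big]\,\rho(|t-t'|)^{p'},
\]
then uses Doob with exponent $p/p'>1$ to conclude that $C_p(\om)\deq\sup_r\big(K\dbE_r[|\xi|^{p'}+\|y\|_{\dbY}^{p'}]\big)^{1/p'}<\infty$ a.s., yielding the \emph{pathwise} modulus $\sup_r|\eta(t,r)-\eta(t',r)|\les C_p(\om)\rho(|t-t'|)$. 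From this one builds the continuous modification by $\bar\eta(t,r)=\limsup_{t_j\to t}\eta(t_j,r)$ over a countable dense set $\{t_j\}$, not by Kolmogorov: since $\rho$ is only a general modulus of continuity, a genuine Kolmogorov criterion (which needs a H\"older-type bound $\dbE|Y(t')-Y(t)|^q\les C|t'-t|^{1+\e}$) is not available here. Once continuity of $Y$ is in hand, your Doob argument for $\dbE[\sup_t|Y(t)|^p]$ goes through verbatim.
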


\begin{proof}
By \autoref{Well-posed-ABSVIE}, BSVIE \rf{BSVIE-path-By} admits a unique solution $(Y(\cd),Z(\cd,\cd))$.
Thus, we only need to prove the pathwise continuity of $Y(\cd)$ and the estimate \rf{BSVIEest}.
Consider the following BSDE parameterized by $t\in[0,T]$ on $[0,T]$:
\bel{BSVIE-path-By-BSDE}
\eta(t,r)=\psi(t)+\int_t^T g(t,s,y_s)ds-\int_r^T\zeta(t,s)dW(s),\q r\in[0,T].
\ee
We emphasize that the initial value of the Lebesgue integral in the above is given by the parameter $t$ (rather than $r$).
By comparing \rf{BSVIE-path-By} and  \rf{BSVIE-path-By-BSDE},
it is obvious to see that
\bel{Proof-Prop:BSVIE-no-y-1}
Y(t)=\eta(t,t),\q Z(t,s)=\zeta(t,s),\q (t,s)\in\D[0,T].
\ee
Then by \autoref{BSDE-well-posed}, we get
\bel{Proof-Prop:BSVIE-no-y-2}
\sup_{t\in[0,T]}\dbE\(\int_t^T |Z(t,s)|^2ds\)^{p\over2}
\les K\dbE\[\sup_{t\in[0,T]}\Big|\int_t^T\3n g(t,s, y_s)ds\Big|^p\2n+\2n\sup_{t\in[0,T]}|\psi(t)|^p\].
\ee

\ms
For a fixed $1<p^\prime<p$, by \autoref{BSDE-well-posed} again we have
\begin{align}
|\eta(t,r)-\eta(t^\prime,r)|^{p^\prime}&\les K\dbE_r\[\Big(\int_{t^\prime}^T \big|g(t,s,y_s)-g(t^\prime,s,y_s)\big|ds\Big)^{p^\prime}+
\Big(\int^{t^\prime}_t|g(t,s,y_s)|ds\Big)^{p^\prime}+|\psi(t)-\psi(t^\prime)|^{p^\prime}\]\nn\\
&\les K\Big\{\dbE_r\big[|\xi|^{p^\prime}+\|y(\cd)\|^{p^\prime}_{\dbY}\big] \rho(|t-t^\prime|)^{p^\prime}+\dbE_r[|\xi|^{p^\prime}+\|y(\cd)\|^{p^\prime}_{\dbY}]|t-t^\prime|^{p^\prime}\Big\}\nn\\
&\les  K\Big\{\dbE_r\big[|\xi|^{p^\prime}+\|y(\cd)\|^{p^\prime}_{\dbY}\big] \Big\}\rho(|t-t^\prime|)^{p^\prime},
\qq 0\les t\les t^\prime\les T,\q r\in[0,T].\label{y-t-t-prime}
\end{align}
Note that $1<p^\prime<p$,  by Doob's maximum inequality, we get
\begin{align}
\dbE\[\sup_{r\in[0,T]}\dbE_r\big[|\xi|^{p^\prime}+\|y(\cd)\|^{p^\prime}_{\dbY}\big]\]
\les K\Big(\dbE\big[|\xi|^p+\|y(\cd)\|^p_{\dbY}\big]\Big)^{p^\prime\over p}<\infty. \label{Doob}
\end{align}
Let $\{t_i\}_{i\ges 1}$ be the rational numbers in $[0, T]$.
There exists an $\bar\Omega\subset\Omega$ such that $\dbP(\bar\Omega)=1$, $\eta(t_i,s,\omega)$ is continuous in $s$, and
\begin{align}
& \sup_{r\in[0,T]} |\eta(t_i,r) - \eta(t_j,r)|(\omega) \les  C_p(\omega)\rho(|t_i-t_j|),\q \forall (i, j),~ \forall \omega\in\bar \Omega,\nn\\
&\mbox{with}\q C_p(\omega)\deq \sup_{r\in[0,T]} \Big(K\dbE_r\big[|\xi|^{p^\prime}+\|y(\cd)\|^{p^\prime}_{\dbY}\big] \Big)^{1\over p^\prime}(\omega) <\infty,\q\forall \omega\in \bar\Omega,\label{wtYholder1}
\end{align}
where the inequality \rf{wtYholder1} is due to \rf{y-t-t-prime} and \rf{Doob}.
For any $t\in[0,T]$, by \rf{y-t-t-prime}, there exists an $\Omega^t\subset\Omega$ such that $\dbP(\Omega^t)=1$ and
\bea
\label{wtYholder2}
\sup_{r\in[0,T]} |\eta(t,r) - \eta(t_j,r)|(\omega) \les C_p(\omega)\rho(|t-t_j|),\q \forall j,~ \forall \omega\in \Omega^t\cap\bar\Omega.
\eea
Define
$$
\bar\eta(t,r,\omega)\deq \limsup_{t_j \to t} \eta(t_j,r,\omega),\q  \forall (t,r,\om)\in[0,T]^2\times\Omega.
$$
From \rf{wtYholder1}, note that for $\omega\in\bar\Omega$, the above $\limsup$ is actually a limit.
Then  for any $ \omega\in\bar\Omega$, $\bar\eta(t,r,\omega)$ is continuous in $r$, and
$$
 \sup_{r\in[0,T]} |\bar\eta(t,r) - \bar\eta(t^\prime,r)|(\omega) \les  C_p(\omega)\rho(|t-t^\prime|),\q \forall t, t^\prime\in[0,T].
$$
Thus,  for any $\omega\in \bar\Omega$, the function $\bar\eta(\cd,\cd,\om)$ is  continuous in $(t,r)\in [0,T]^2$.
Moreover, by \rf{wtYholder2} we have
$$
\eta(t,r,\omega)=\bar\eta(t,r,\omega),\q \forall r\in[0,T],~ \forall \omega\in\Omega^t\cap\bar\Omega.
$$
Since $\dbP(\Omega^t\cap \bar\Omega)=1$, so $\bar\eta(\cd,\cd)$ is a desired version of $\eta(\cd,\cd)$.
Then by always considering this version,  $\eta(\cd,\cd)$ is jointly continuous in $(t,r)$, a.s.
In particular, the process $Y(\cd)$, defined by $Y(t) = \eta(t,t);t\in[0,T]$, is continuous in $t$, a.s.

\ms

By \autoref{BSDE-well-posed}, we have
$$
 |\eta(r,r)|^{p^\prime}
\les K\dbE_r\[\Big|\int_r^T g(r,s,y_s)ds\Big|^{p^\prime}+|\psi(r)|^{p^\prime}\],
$$
which, together with $Y(r)=\eta(r,r)$, implies that
$$
 | Y(r)|^{p^\prime}
\les K\dbE_r\[\sup_{t\in[0,T]}\Big|\int_t^T g(t,s,y_s)ds\Big|^{p^\prime}+\sup_{t\in[0,T]}|\psi(t)|^{p^\prime}\].
$$
Note that both the processes  $Y(\cd)$ and
$
\dbE_\cd\big[\sup_{t\in[0,T]}\big|\int_t^T g(t,s,y_s)ds\big|^{p^\prime}+\sup_{t\in[0,T]}|\psi(t)|^{p^\prime}\big]
$
are pathwise continuous.
Then by  Doob's maximum inequality again (noting $p>p^\prime$), we get
$$
\dbE\[\sup_{t\in[0,T]} | Y(t)|^p\]
\les K\dbE\[\sup_{t\in[0,T]}\Big|\int_t^T g(t,s,y_s)ds\Big|^p+\sup_{t\in[0,T]}|\psi(t)|^p\].
$$

\end{proof}


\ms\no
{\it\textbf{Proof of \autoref{thm:well-posedness-A}}.}
By \autoref{Prop:BSVIE-no-y}, the map
$$
\G(y(\cd),z(\cd,\cd))\deq (Y(\cd),Z(\cd,\cd)),\q \forall(y(\cd),z(\cd,\cd))\in L_{\dbF}^p(\Om;C([0,T];\dbR^n))\times L^p_{\dbF}(\Om;L^2(\D[0,T];\dbR^n)),
$$
is well-defined on the space $L_{\dbF}^p(\Om;C([0,T];\dbR^n))\times L^p_{\dbF}(\Om;L^2(\D[0,T];\dbR^n))$,
where $(Y(\cd),Z(\cd,\cd))$ is the unique solution to \rf{BSVIE-path-By}.
Let $(\ti y(\cd),\ti z(\cd,\cd))\in L_{\dbF}^p(\Om;C([0,T];\dbR^n))\times L^p_{\dbF}(\Om;L^2(\D[0,T];\dbR^n))$
and denote $(\ti Y(\cd),\ti Z(\cd,\cd))=\G(\ti y(\cd),\ti z(\cd,\cd))$.
Note that for any $\d\in[0,T)$, by \autoref{Prop:BSVIE-no-y},
\begin{align}
&\dbE\Big[\sup_{T-\d\les t\les T} |Y(t)-\ti Y(t)|^p \Big] + \sup_{T-\d\les t\les T}\dbE\(\int_t^T |Z(t,s)-\ti Z(t,s)|^2ds\)^{p\over 2}\nn\\
&\q\les K\d^p \dbE\Big[\sup_{T-\d\les t\les T} |y(t)-\ti y(t)|^p \Big].
\end{align}
Then by the fixed point theorem, we can get that BSVIE \rf{BSVIE-path} admits a unique solution $(\bar Y(\cd),\bar Z(\cd,\cd))\in L_{\dbF}^2(\Om;C([T-\d,T];\dbR^n))\times L^p_{\dbF}(\Om;L^2(\D[T-\d,T];\dbR^n))$ for some small enough $\d>0$.

\ms
Next, we consider the following BSVIE
\bel{BSVIE-2d}
Y(t)=\psi(t)+\int_t^T \[g(t,s,Y_s)\textbf{1}_{[0,T-\d)}(s)+ g(t,s,\bar Y_s)\textbf{1}_{[T-\d,T]}(s) \]ds-\int_t^T Z(t,s)dW(s),\q t\in[0,T].
\ee
The map
$$
\h\G(y(\cd),z(\cd,\cd))\deq (Y(\cd),Z(\cd,\cd)),\q \forall(y(\cd),z(\cd,\cd))\in L_{\dbF}^p(\Om;C([0,T];\dbR^n))\times L^p_{\dbF}(\Om;L^2(\D[0,T];\dbR^n)),
$$
is still well-defined on the space $L_{\dbF}^p(\Om;C([0,T];\dbR^n))\times L^p_{\dbF}(\Om;L^2(\D[0,T];\dbR^n))$,
where $(Y(\cd),Z(\cd,\cd))$ is the unique solution to the following BSVIE:
$$
Y(t)=\psi(t)+\int_t^T \[g(t,s,y_s)\textbf{1}_{[0,T-\d)}(s)+ g(t,s,\bar Y_s)\textbf{1}_{[T-\d,T]}(s) \]ds-\int_t^T Z(t,s)dW(s),\q t\in[0,T].
$$
We emphasize that in the above, the generator is independent of $y(s),s\in[T-\d,T]$.
Let $(\ti y(\cd),\ti z(\cd,\cd))\in L_{\dbF}^p(\Om;C([0,T];\dbR^n))\times L^p_{\dbF}(\Om;L^2(\D[0,T];\dbR^n))$
and denote $(\ti Y(\cd),\ti Z(\cd,\cd))=\h\G(\ti y(\cd),\ti z(\cd,\cd))$.
Then we have
\begin{align*}
&\dbE\Big[\sup_{T-2\d\les t\les T} |Y(t)-\ti Y(t)|^p \Big]
+ \sup_{T-2\d\les t\les T}\dbE\(\int_t^T |Z(t,s)-\ti Z(t,s)|^2ds\)^{p\over 2}\\
&\q\les K\d^p \dbE\Big[\sup_{T-2\d\les t\les T-\d} |y(t)-\ti y(t)|^p \Big].
\end{align*}
By the fixed point theorem again, we can get that BSVIE \rf{BSVIE-2d} admits a unique solution $(\h Y(\cd),\h Z(\cd,\cd))\in L_{\dbF}^p(\Om;C([T-2\d,T];\dbR^n))\times L^p_{\dbF}(\Om;L^2(\D[T-2\d,T];\dbR^n))$.
Clearly,
$$
\h Y(t)=\bar Y(t),\q \h Z(t,s)=\bar Z(t,s),\q T-\d\les t\les s\les T.
$$
Thus, BSVIE \rf{BSVIE-path} admits a unique solution on $[T-2\d,T]$.
By continuing such a procedure, we obtain the existence and uniqueness of a solution to BSVIE \rf{BSVIE-path}.
Moreover, by \autoref{Prop:BSVIE-no-y}, we have
\begin{align*}
&\dbE\Big[\sup_{0\les t\les T} |Y(t)|^p \Big] + \sup_{0\les t\les T}\dbE\(\int_t^T |Z(t,s)|^2ds\)^{p\over 2}\\
&\q\les K\dbE\int_{0}^T \|Y_s\|^p_{\dbY_s}ds+K\dbE\[\sup_{t\in[0,T]}\Big|\int_t^T g(t,s,y_s)ds\Big|^p+\sup_{t\in[0,T]}|\psi(t)|^p\].
\end{align*}
Then by Gr\"{o}nwall's inequality, we get the estimate \rf{BSVIEest} immediately.

\subsection{Path-Dependent Type-I BSVIE with Adaptedness Conditions}

Consider the following path-dependent BSVIE with progressively measurable generators:

\bel{BSVIE-path-2}
Y(t)=\psi(t)+\int_t^T g(t,s,Y_s,Z(t,s))ds-\int_t^TZ(t,s)dW(s),\q t\in[0,T].
\ee

\ss
{\bf(H4)$_p^\prime$}
The map $\psi:[0,T]\to\dbR^n$ is $\cF_T$-measurable and the map $g:\D[0,T]\times \dbY\times\dbR^n\times\Om\to\dbR^n$
is $\cB(\D[0,T]\times \dbY\times\dbR^n)\otimes\cF_T$-measurable.
For any $(t,s)\in\D[0,T]$, the geneator $g(t,s,\cd,\cd)$ satisfies:
\begin{align}
&g(t,s,{\bf y},z)=g(t,s,{\bf y_s},z),\q \forall ({\bf y},z)\in\dbY\times\dbR^n,\nn\\
&g(t,s,y_s,z)\in\cF_s,\q \forall ( y(\cd),z)\in L^p_{\dbF}(\Om; C([0,T];\dbR^n))\times\dbR^n.\label{adapt-condition}
\end{align}
There exist a constants $L>0$ and a modulus of continuous function $\rho(\cd)$ and a positive random variable $\xi\in L^p_{\cF_T}(\Om;\dbR_+)$ such that:
\begin{align}
&|g(t,s,\By_s,z)-g( t,s,\By^\prime_s,z^\prime)|\les L\big[\|\By_s-\By^\prime_s\|_{\dbY_s}+|z-z^\prime|\big],\nn\\
&|g(t-\d,s,\By_s,z)-g( t,s,\By_s,z)|+|\psi(t-\d)-\psi(t)|\les (|\xi|+\|\By\|_{\dbY_s}+|z|)\rho(\d),\nn\\
&\qq\qq\qq\qq\qq\q \forall (t,s)\in\D[0,T],\, \By_s,\By_s^\prime\in\dbY_s,\,(z,z^\prime)\in\dbR^n,\,\d\in[0,t].
\end{align}
Moreover,
\bel{IIp}\dbE\[\sup_{t\in[0,T]}\Big|\int_t^T g(t,s,{\bf 0},0)ds\Big|^p+\sup_{t\in[0,T]}|\psi(t)|^p\]<\infty.
\ee

\begin{remark}\rm
Note that the adaptedness condition \rf{adapt-condition} is much stronger than the following
progressively measurable condition:
$$
g(t,s,{\bf y_s},z)\in\cF_s,\q \forall ({\bf y},z)\in\dbY\times\dbR^n.
$$
\end{remark}
Combining the arguments employed in \autoref{Prop:BSVIE-no-y} and \cite[Theorem 3.7]{Yong2008} together,
the following result can be obtained easily. We omit it here.

\begin{theorem}\label{thm:well-posedness-A-P}
Let {\rm(H4)$_p^\prime$} hold with $p>1$.
Then  BSVIE \rf{BSVIE-path-2} admits a unique adapted solution $(Y(\cd),Z(\cd,\cd))$
such that $Y(\cd)$ is continuous in $t$ and the following estimate holds true:
\begin{align}
&\dbE\Big[\sup_{0\les t\les T} |Y(t)|^p \Big] + \sup_{0\les t\les T}\dbE\(\int_t^T |Z(t,s)|^2ds\)^{p\over2}\nn\\
&\q\les K\dbE\[\sup_{t\in[0,T]}\Big|\int_t^T g(t,s,{\bf 0},0)ds\Big|^p+\sup_{t\in[0,T]}|\psi(t)|^p\].
\end{align}
\end{theorem}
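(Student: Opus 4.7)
The plan is to adapt the two-layer fixed-point scheme from the proof of \autoref{thm:well-posedness-A}, but in the inner layer one only freezes the path-variable $Y_s$ (not $Z$), thereby exploiting the adaptedness condition \rf{adapt-condition} to reduce the problem to a standard BSVIE with an $\dbF$-progressively measurable generator, to which the machinery of Yong \cite{Yong2008} and \autoref{Prop:BSVIE-no-y} directly applies. Specifically, for any fixed $y(\cd)\in L^p_{\dbF}(\Om;C([0,T];\dbR^n))$, consider the auxiliary BSVIE
$$
Y(t)=\psi(t)+\int_t^T g(t,s,y_s,Z(t,s))ds-\int_t^T Z(t,s)dW(s),\qq t\in[0,T].
$$
By \rf{adapt-condition}, the generator $\ti g(t,s,z)\deq g(t,s,y_s,z)$ is $\cF_s$-measurable and uniformly Lipschitz in $z$. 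For each fixed $t$, this is a standard BSDE on $[t,T]$ in the sense of \autoref{BSDE-well-posed}, hence admits a unique adapted solution $(\eta(t,\cd),\z(t,\cd))$; setting $Y(t)=\eta(t,t)$ and $Z(t,s)=\z(t,s)$ on $\D[0,T]$ yields a pair in $\cH^p_\D[0,T]$.

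Next, to obtain pathwise continuity of $t\mapsto Y(t)$ and the $\sup$-norm estimate, I would repeat the strategy of \autoref{Prop:BSVIE-no-y}: extend $\eta(t,\cd)$ to a BSDE on the full interval $[0,T]$ with the same Lebesgue-integral data, derive a moment bound for $\sup_{r\in[0,T]}|\eta(t,r)-\eta(t',r)|^{p'}$ (with $1<p'<p$) via \rf{BSDE-stability} and the $\rho(\cd)$-modulus of continuity of $g$ and $\psi$ in $t$, and then combine Doob's maximal inequality (to produce the finite constant $C_p(\om)$) with a rational subsequence / continuous-version argument to select a jointly $(t,r)$-continuous version of $\eta$. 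Pathwise continuity of $Y(t)=\eta(t,t)$ and the $L^p$-sup estimate then follow from Doob's inequality applied to the conditional expectation of $\sup_{t\in[0,T]}|\int_t^T g(t,s,y_s,0)ds|^{p'}+\sup_{t\in[0,T]}|\psi(t)|^{p'}$, using $p>p'$.

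The outer fixed point is handled exactly as in the proof of \autoref{thm:well-posedness-A}. Define
$$
\G(y(\cd),z(\cd,\cd))\deq(Y(\cd),Z(\cd,\cd))
$$
on $L^p_{\dbF}(\Om;C([0,T];\dbR^n))\times L^p_{\dbF}(\Om;L^2(\D[0,T];\dbR^n))$. Using Lipschitz continuity of $g$ in $\By_s$ (and the fact that, since $g$ is now adapted and Lipschitz in $z$, the BSDE stability \rf{BSDE-stability} absorbs the $z$-Lipschitz constant into the generic $K$), one obtains on any sub-interval $[T-\d,T]$
$$
\dbE\Big[\sup_{T-\d\les t\les T}|Y(t)-\ti Y(t)|^p\Big]+\sup_{T-\d\les t\les T}\dbE\Big(\int_t^T|Z(t,s)-\ti Z(t,s)|^2ds\Big)^{p/2}\les K\d^p\,\dbE\Big[\sup_{T-\d\les t\les T}|y(t)-\ti y(t)|^p\Big],
$$
so that $\G$ is a contraction on $L^p_{\dbF}(\Om;C([T-\d,T];\dbR^n))$ for $\d>0$ small enough. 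Iterating backwards over $[T-2\d,T-\d]$, $[T-3\d,T-2\d]$, etc., exactly as in the passage after \rf{BSVIE-2d}, produces a unique global adapted solution, and a final Gr\"onwall argument applied to the estimate from \autoref{Prop:BSVIE-no-y} (with $g(t,s,y_s,0)$ controlled by $L\|y_s\|_{\dbY_s}+|g(t,s,\mathbf 0,0)|$) delivers the stated $L^p$-bound.

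The main (and essentially only) subtlety lies in exploiting \rf{adapt-condition} correctly: one must check that whenever $y(\cd)$ is $\dbF$-adapted, the random field $s\mapsto g(t,s,y_s,z)$ is genuinely $\dbF$-progressively measurable (not merely $\cF_T$-measurable), so that the inner BSDE indeed falls under \autoref{BSDE-well-posed} rather than requiring the more elaborate anticipating-generator theory of \autoref{Well-posed-ABSVIE}. Once this is secured, every remaining step is a direct transcription of arguments already established in \autoref{Prop:BSVIE-no-y} and in the proof of \autoref{thm:well-posedness-A}.
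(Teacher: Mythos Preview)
Your proposal is correct and follows essentially the same approach the paper indicates: the paper omits the proof entirely, stating only that it is obtained by ``combining the arguments employed in \autoref{Prop:BSVIE-no-y} and \cite[Theorem 3.7]{Yong2008}'', which is precisely the two-layer scheme you outline (inner layer: adapted-generator BSVIE via \rf{adapt-condition} and Yong's well-posedness; outer layer: contraction in the path variable as in \autoref{thm:well-posedness-A}, plus the continuous-version argument of \autoref{Prop:BSVIE-no-y}).
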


\section{Connections with  FBSDEs}\label{Sec:BSVIE-FBSDE}

In this section, we shall show that if the generator of BSVIEs is allowed to be anticipating,
then it has an interesting connection with (coupled) FBSDEs.

\ms

Let us consider the following coupled FBSDE parameterized by $t\in[0,T)$:
\bel{FBSDE}\left\{
\begin{aligned}
X^{t}(s)&=x^{t}+\int_{t}^s b^t(r,Z^{t}(r))dr,\\
Y^{t}(s)&=\xi^t X^{t}(T)+\int_{s}^T g^t(r,Z^{t}(r))dr-\int_s^T Z^{t}(r)dW(r),
\end{aligned}
\right.\q s\in[t,T].
\ee
In the above, for any fixed $t\in[0,T)$, $b^t,\,g^t:[t,T]\times\dbR^n\times\Om\to\dbR^n$ are
$\dbF$-progressively measurable and Lipschitz continuous with respect to $z$,
$x^t$ is an $\cF_t$-measurable and square-integrable random variable,
and $\xi^t$  is an $\cF_T$-measurable and essentially bounded random variable.

\begin{proposition}\label{Prop:FBSDE-BSVIE}
\footnote{Hanxiao Wang would like to thank Chenchen Mou (City University of Hong Kong) for  suggesting him to study the connection between BSVIEs
with anticipating generators and coupled FBSDEs.} \sl
The well-posedness of FBSDE \rf{FBSDE} on any time horizon $[t,T],\,t\in[0,T]$ is equivalent to the well-posedness of the following BSVIE:
\begin{align}
Y(t)&=\xi^t x^t +\int_t^T\big[ \xi^t b^t(r,Z(t,r))+ g^t(r,Z(t,r))\big]dr-\int_t^T Z(t,r)dW(r),\qq t\in[0,T].\label{FBSDE-BSVIE}
\end{align}
\end{proposition}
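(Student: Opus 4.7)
The heart of the equivalence is the algebraic substitution obtained by plugging
$$X^t(T)=x^t+\int_t^T b^t(r,Z^t(r))dr$$
into the backward equation of \rf{FBSDE} evaluated at $s=t$: this yields exactly \rf{FBSDE-BSVIE} under the identification $Y(t)=Y^t(t)$ and $Z(t,r)=Z^t(r)$. All of the real work lies in promoting this pointwise-in-$t$ identity to a correspondence between adapted solutions, in both directions.

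For the direction FBSDE $\Rightarrow$ BSVIE, suppose that for every $t\in[0,T)$ there is a unique adapted solution $(X^t,Y^t,Z^t)$ of \rf{FBSDE} on $[t,T]$. Define $Y(t):=Y^t(t)$ and $Z(t,r):=Z^t(r)$ on $\D[0,T]$; since $Y^t(t)\in\cF_t$ and $r\mapsto Z^t(r)$ is $\dbF$-adapted on $[t,T]$, the pair $(Y(\cd),Z(\cd\,,\cd))$ has the required measurability. Substituting the forward formula for $X^t(T)$ into the backward equation at $s=t$ produces \rf{FBSDE-BSVIE}, and uniqueness of $(Y,Z)$ is inherited from uniqueness of each $(X^t,Y^t,Z^t)$.

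For the converse BSVIE $\Rightarrow$ FBSDE, fix $t\in[0,T)$ and set $Z^t(r):=Z(t,r)$ on $[t,T]$. The forward SDE in \rf{FBSDE} has a drift depending only on $Z^t$, so $X^t(s):=x^t+\int_t^s b^t(r,Z^t(r))dr$ is defined directly. The delicate step is to produce an $\dbF$-adapted $Y^t$ such that $(Y^t,Z^t)$ solves the BSDE on $[t,T]$ with terminal $\xi^tX^t(T)$ and generator $g^t$. Introduce the martingale
$$N(s):=\dbE_s\Big[\xi^tX^t(T)+\int_t^T g^t(r,Z^t(r))dr\Big],\qq s\in[t,T],$$
and use the martingale representation theorem to write $N(s)=N(t)+\int_t^s\h Z(r)dW(r)$. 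Rewriting \rf{FBSDE-BSVIE} at time $t$ as
$$\xi^tX^t(T)+\int_t^T g^t(r,Z^t(r))dr=Y(t)+\int_t^T Z^t(r)dW(r),$$
taking $\cF_t$-conditional expectations yields $N(t)=Y(t)$, while subtracting these two identities gives $\int_t^T\h Z(r)dW(r)=\int_t^T Z^t(r)dW(r)$, so $\h Z=Z^t$ by uniqueness of the representation. Setting $Y^t(s):=N(s)-\int_t^s g^t(r,Z^t(r))dr=Y(t)+\int_t^s Z^t(r)dW(r)-\int_t^s g^t(r,Z^t(r))dr$ then defines an $\dbF$-adapted process with $Y^t(t)=Y(t)$ and $Y^t(T)=\xi^tX^t(T)$, and rearranging gives the backward equation of \rf{FBSDE} on $[t,T]$.

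The main obstacle is precisely this converse direction: one must show that the $Z(t,\cd)$ supplied by the BSVIE genuinely coincides with the $Z$-component of the unique adapted solution of a standard BSDE with terminal $\xi^tX^t(T)$, and the martingale-representation argument above performs this identification while simultaneously producing the correct adapted $Y^t$. Uniqueness of FBSDE solutions transfers back to uniqueness of BSVIE solutions along the same bijective correspondence, completing the equivalence of well-posedness.
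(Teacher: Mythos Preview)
Your proposal is correct and follows essentially the same route as the paper. The only cosmetic difference is in the BSVIE $\Rightarrow$ FBSDE direction: the paper packages the construction of $Y^t(\cdot)$ by solving the trivially-generated, parameterized BSDE \rf{BSDE-parameter} and then invoking uniqueness of the (generator-free) BSVIE \rf{BSDE-BSVIE1} to force $\cZ(t,\cdot)=Z(t,\cdot)$, whereas you carry out the identical identification by a direct martingale-representation argument; since the BSDE in question has zero generator, these two arguments are literally the same computation in different language.
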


\begin{proof}
The proof is divided into two steps.

\ms

{\bf Step 1.} Suppose that BSVIE \rf{FBSDE-BSVIE} has a solution $(Y(\cd),Z(\cd,\cd))$.
For any fixed $t$, we consider the following BSDE with unknown processes $(\cY(t,\cd),\cZ(t,\cd))$:
\bel{BSDE-parameter}
\cY(t,s)=\xi^t x^t +\int_t^T\big[ \xi^t b^t(r,Z(t,r))+ g^t(r,Z(t,r))\big]dr-\int_s^T \cZ(t,r)dW(r),\qq s\in[t,T].
\ee
Taking $\xi^t x^t +\int_t^T\big[ \xi^t b^t(r,Z(t,r))+ g^t(r,Z(t,r))\big]dr$ as a terminal term, by the standard results of BSDEs,
we get that BSDE \rf{BSDE-parameter} admits a unique solution $(\cY(t,\cd),\cZ(t,\cd))$.
Letting $s=t$ and $\cY(t)=\cY(t,t)$ in \rf{BSDE-parameter}, we get
\begin{align}
\cY(t)&=\xi^t x_t +\int_t^T\big[ \xi^t b^t(r,Z(t,r))+ g^t(r,Z(t,r))\big]dr-\int_t^T \cZ(t,r)dW(r),\qq t\in[0,T].\label{BSDE-BSVIE1}
\end{align}
Then by the uniqueness of the solution to BSVIE \rf{BSDE-BSVIE1} (with $(\cY(\cd),\cZ(\cd,\cd))$ being unknown processes),
we have
$$
Y(t)=\cY(t), \q Z(t,s)=\cZ(t,s),\qq (t,s)\in\D[0,T].
$$
Substituting the above into \rf{BSDE-parameter} yields that
$$
\cY(t,s)=\xi^t x^t +\int_t^T\big[ \xi^t b^t(r,\cZ(t,r))+ g^t(r,\cZ(t,r))\big]dr-\int_s^T \cZ(t,r)dW(r),\qq s\in[t,T].
$$
Let
$$
X^t(s)= x^t +\int_t^s  b^t(r,\cZ(t,r))dr,\q Y^t(s)=\cY(t,s), \qq Z^t(s)=\cZ(t,s),\qq (t,s)\in\D[0,T].
$$
Then it is easily checked that for any $t\in[0,T)$,
$(X^t(\cd),Y^t(\cd),Z^t(\cd))$ is a solution of FBSDE \rf{FBSDE} on $[t,T]$.

\ms

{\bf Step 2.} Suppose that for any $t\in[0,T)$, FBSDE \rf{FBSDE} has a solution $(X^t(\cd),Y^t(\cd),Z^t(\cd))$.
Then from the fact
$$
X^{t}(T)=x^t+\int_{t}^T b^t(r,Z^{t}(r))dr,
$$
we have
$$
Y^t(s)=\xi^t x^t +\int_t^T \xi^t b^t(r,Z^t(r))dr +\int_s^T g^t(r,Z^t(r))dr-\int_s^T Z^t(r)dW(r),\qq s\in[t,T].
$$
By taking $s=t$ in the above, we get
$$
Y^t(t)=\xi^t x^t +\int_t^T \xi^t b^t(r,Z^t(r))dr +\int_t^T g^t(r,Z^t(r))dr-\int_t^T Z^t(r)dW(r),\qq t\in[0,T].
$$
Thus, the process $(Y(\cd),Z(\cd,\cd))$, defined by
$$
Y(t)\deq Y^t(t),\q Z(t,s)\deq Z^t(s),\qq (t,s)\in\D[0,T],
$$
is a solution to BSVIE \rf{FBSDE-BSVIE}.
\end{proof}

\begin{remark}\label{remark-fbsde}\rm
Note that  \rf{FBSDE} is  a multi-dimensional coupled  FBSDE and the forms of  $b^t(\cd), g^t(\cd)$ are very general.
Thus, the coefficients $\xi^t$, $b^t(\cd)$ and $g^t(\cd)$ might not satisfy the monotone condition imposed for
coupled FBSDEs in the literature; see, \cite{Hu-Peng1995,Yong1997,Ma-Yong1999,Zhang2017}, for example.
To ensure that \rf{FBSDE} admits a unique solution for any given time horizon $[t,T]\subseteq[0,T]$,
we can impose the small-time assumption, that is, $\|\xi^t b^t_z(\cd)\|_{L^\infty}$ is controlled by some constant $K_T>0$ depending on $T$.
On the other hand, by {\rm\autoref{Prop:FBSDE-BSVIE}}, we know that the well-posedness of FBSDE \rf{FBSDE}
is equivalent to that of the BSVIE \rf{FBSDE-BSVIE} with the following generator:
$$
g(t,s,z)\equiv \xi^t b^t(s,z)+ g^t(s,z),\q (t,s,z)\in\D[0,T]\times\dbR^n.
$$
Clearly, $g(\cd)$ is anticipating, because $\xi^t$ is $\cF_T$-measurable.
The corresponding decomposition (see \rf{decomposition}) is given by
$$
 g_1(t,s,y,z,\h z)=\dbE_s[\xi^t]b^t(s,z)+g^t(s,z),\q  g_0(t,s,y,z,\h z)=\{\xi^t-\dbE_s[\xi^t]\}b^t(s,z).
$$
The corresponding $L_z^0(\cd,\cd)$ can be given by $\|\{\xi^t-\dbE_s[\xi^t]\} b^t_z(\cd)\|_{L^\infty}$.
Thus, \rf{K<1} is in some sense equivalent to the small-time assumption for coupled FBSDEs.
\end{remark}

\section{Conclusion}\label{Sec:Concludng}

In conclusion,  the BSVIEs with anticipating generators are explored for the first time.
The path-dependent BSVIEs are also introduced and studied.
Under proper conditions, the well-posedness of these kinds of BSVIEs are established.
Some essential differences between BSDEs and BSVIEs are revealed: (i) If the generator is allowed to be anticipating,
BSDEs are ill-posed in general, but they can still have unique adapted solutions within the framework of BSVIEs;
(ii) For anticipated BSDEs, the adaptedness condition is not avoidable, but it is no longer necessary within the framework of
(path-dependent) BSVIEs. In addition, we find that if the generator of BSVIEs is allowed to be anticipating,
it has an interesting connection with coupled FBSDEs.

\ms
The following provides a clear path of how  a BSVIE is reduced to a BSDE by changing the measurable condition of
the generator.

\ms
\textbf{(I)} Consider the following BSVIE with the anticipating generator $g(\cd)$:
\bel{Con-1}
Y(t)=\psi(t)+\int_t^T g(t,s,Y(s),Z(t,s))ds-\int_t^TZ(t,s)dW(s),\q t\in[0,T].
\ee
If $g(t,s,y,z)$ and $\psi(t)$ are independent of $t$, \rf{Con-1} is still a BSVIE, with the form:
\bel{Con-2}
Y(t)=\xi+\int_t^T g(s,Y(s),Z(t,s))ds-\int_t^TZ(t,s)dW(s),\q t\in[0,T].
\ee
If the generator $g(\cd)$ is adapted, \rf{Con-2} is reduced to a BSDE:
\bel{}
Y(t)=\xi+\int_t^T g(s,Y(s),Z(s))ds-\int_t^TZ(s)dW(s),\q t\in[0,T].
\ee

\ms
\textbf{(II)} Consider the following path-dependent BSVIE with the anticipating generator $g(\cd)$:
\bel{Con-4}
Y(t)=\psi(t)+\int_t^T g(t,s,Y_s)ds-\int_t^TZ(t,s)dW(s),\q t\in[0,T].
\ee
If $g(t,s,\By)$ and $\psi(t)$ are independent of $t$, \rf{Con-4} is still a BSVIE, with the form:
\bel{Con-5}
Y(t)=\xi+\int_t^T g(s,Y_s)ds-\int_t^TZ(t,s)dW(s),\q t\in[0,T].
\ee
If the generator $g(s,y_s)$ is $\cF_T$-measurable for any $y(\cd)\in L_{\dbF}^p(\Om; C([0,T];\dbR^n))$,
in which case  $g(s,\By_s)$ can be $\cF_s$-measurable for any $\By(\cd)\in C([0,T];\dbR^n)$,
 \rf{Con-5} is still a BSVIE, with the form:
\bel{Con-6}
Y(t)=\xi+\int_t^T g(s,Y_s)ds-\int_t^TZ(t,s)dW(s),\q t\in[0,T].
\ee
If the generator $g(s,y_s)$ is $\cF_s$-measurable for any $y(\cd)\in L_{\dbF}^p(\Om; C([0,T];\dbR^n))$,
\rf{Con-6} is reduced to an anticipated BSDE:
\bel{}
Y(t)=\xi+\int_t^T g(s,Y_s)ds-\int_t^TZ(s)dW(s),\q t\in[0,T].
\ee

\section{Appendix: BSDEs with Progressively Measurable Generators}

First, let us recall the martingale moment inequalities (see \cite[page 163]{Karatzas-Shreve1988}).

\begin{proposition}\label{Martingale Moment}
For any $p>1$, there exists a constant $K_p>0$ such that for any $z(\cd)\in L^p_\dbF(\Om;$ $L^2(0,T;\dbR^n))$,
\bel{inequality}\dbE\(\int_0^T|z(s)|^2ds\)^{p\over2}
\les K_p\dbE\Big|\int_0^Tz(s)dW(s)\Big|^p.\ee
\end{proposition}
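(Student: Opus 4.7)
The plan is to deduce this inequality directly from two classical results about continuous martingales: the lower Burkholder--Davis--Gundy (BDG) inequality and Doob's $L^p$ maximal inequality. Since the paper signals (through the citation to Karatzas--Shreve) that this is meant to be a standard fact rather than a novel estimate, the proof is essentially a two-line assembly; the work is in identifying the right building blocks and checking that $p>1$ is precisely the regime where both steps apply with sharp constants.

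First I would introduce the continuous square-integrable martingale $M_t \deq \int_0^t z(s)\,dW(s)$, whose quadratic variation is $\langle M\rangle_T = \int_0^T|z(s)|^2ds$. The lower BDG inequality then gives a constant $C_p>0$ (depending only on $p>0$) such that
\[
\dbE\langle M\rangle_T^{p/2} \les C_p\,\dbE\Big[\sup_{0\les t\les T}|M_t|^p\Big].
\]
This step is the real content: for $p=2$ it is just It\^o's isometry (so $C_2=1$), for $p\ges2$ one can derive it by applying It\^o's formula to $|M_t|^p$ and using H\"older, and for $1<p<2$ it follows from a good-$\lambda$ argument or a Davis-type decomposition. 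In a self-contained write-up this would be the main obstacle; here one simply invokes the standard reference.

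Second I would use the hypothesis $p>1$ to apply Doob's $L^p$ maximal inequality to the continuous martingale $M$, obtaining
\[
\dbE\Big[\sup_{0\les t\les T}|M_t|^p\Big] \les \Big(\frac{p}{p-1}\Big)^p\,\dbE|M_T|^p.
\]
Combining the two displays yields \rf{inequality} with $K_p \deq C_p\big(\frac{p}{p-1}\big)^p$. One should also record that $K_2=1$, which is used in the estimate \rf{H-M} in the body of the paper: indeed for $p=2$ the chain collapses because $\dbE|M_T|^2 = \dbE\langle M\rangle_T$ by It\^o's isometry, so no Doob step is needed and the inequality becomes an equality. The restriction $p>1$ in the hypothesis is sharp for this formulation because Doob's inequality fails for $p=1$; in that case only a BMO-type substitute with an extra $L\log L$ factor is available, which is why the whole framework of the paper is stated for $p>1$.
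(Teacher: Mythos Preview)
Your argument is correct: the lower BDG inequality followed by Doob's $L^p$ maximal inequality is the standard derivation, and your remark on $K_2=1$ via It\^o's isometry is exactly what the paper uses downstream. Note, however, that the paper does not supply a proof of this proposition at all; it simply recalls the inequality with a citation to \cite[page~163]{Karatzas-Shreve1988}, so there is no in-paper argument to compare against---your write-up is effectively filling in what the reference contains.
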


Note that if $p=2$, then $K_2=1$ and the equality holds in the above.

\ms

Let us now consider the following BSDE:
\bel{BSDE0}Y(t)=\xi+\int_t^Tg_1(s,Z(s))ds-\int_t^TZ(s)dW(s),\qq t\in[0,T].\ee
We introduce the following hypothesis.

\ms

{\bf(H0)$_p$} Let $g_1:[0,T]\times\dbR^n\times\Om\to\dbR^n$ satisfy the following: For each $z\in\dbR^n$, $t\mapsto g_1(t,z)$ is $\dbF$-progressively measurable. There exists a function $L_z^1(\cd)\in L^2(0,T)$ such that
$$|g_1(t,z_1)-g_1(t,z_2)|\les L_z^1(t)|z_1-z_2|,\qq\forall t\in[0,T],~z\in\dbR^n.$$
Further, $g_1(\cd\,,0)\in L_\dbF^p(\Om;L^1(0,T;\dbR^n))$.

\ms

In what follows, we will denote
$$L_0^1(s)=|g_1(s,0)|,\qq s\in[0,T].$$

\ms

\bp{BSDE-prop1} \sl Let {\rm(H0)$_p$} hold. Then for any $\xi\in L^p_{\cF_T}(\Om;\dbR^n)$, BSDE \rf{BSDE0} admits a unique adapted solution $(Y(\cd),Z(\cd))$, and the following holds:
\bel{estimate1}\[\sup_{t\in[0,T]}\dbE|Y(t)|^p\]^{1\over p}+\[\dbE\(\int_0^T|Z(s)|^2ds\)^{p\over2}\]^{1\over p}
\les\h K_p\(\dbE|\xi|^p\)^{1\over p}+N_p\h K_p\[\dbE\(\int_0^TL_0^1(s)ds\)^p\]^{1\over p},\ee
where
\bel{K_p^0-1}\begin{aligned}
\h K_p&=\min\Big\{\[1+2\ti K_pN \({\sqrt N\over\sqrt N-\sqrt{\bar K_p}}\)\]\({\sqrt N\over\sqrt N-\sqrt{\bar K_p}}\)^N\bigm|N\in\dbN,~N>\bar K_p\Big\}\\
&\equiv\[1+2\ti K_pN_p\({\sqrt{N_p}\over\sqrt{N_p}-\sqrt{\bar K_p}}\)\]\({\sqrt{N_p}\over\sqrt{N_p}-\sqrt{\bar K_p}}\)^{N_p},
\end{aligned}
\ee
with
\bel{bar-K} \ti K_p =K_p^{1\over p},\qq \bar K=4\ti K^2_p\int_0^TL_z^1(s)^2ds.\ee
Further, if $g_i(\cd\,,\cd)$ satisfies {\rm(H0)$_p$}, $\xi_i\in L^p_{\cF_T}(\Om;\dbR^n)$, and $(Y_i(\cd),Z_i(\cd))$
is the adapted solution to the corresponding BSDE, $i=1,2$, then
\bel{estimate2}\ba{ll}
\ds\[\sup_{t\in[0,T]}\dbE|Y_1(t)-Y_2(t)|^p\]^{1\over p}+\[\dbE\(\int_0^T|Z_1(s)-Z_2(s)|^2ds\)^{p\over2}\]^{1\over p}\\
\ns\ds\q\les\h K_p\(\dbE|\xi_1-\xi_2|^p\)^{1\over p}+N_p\h K_p\[\dbE\(\int_0^T|g_1(s,Z_1(s))-g_2(s,Z_1(s))|ds\)^p\]^{1\over p}.
\ea\ee

\ep

\it Proof. \rm First of all, BSDE \rf{BSDE0} leads to
$$Y(t)=\dbE_t\[\xi+\int_t^Tg_1(s,Z_1(s))ds\].$$
Thus,
$$\ba{ll}
\ns\ds\(\dbE|Y(t)|^p\)^{1\over p}\les\(\dbE|\xi|^p\)^{1\over p}
+\[\dbE\(\int_t^TL^1_0(s)ds\)^p\]^{1\over p}+\[\dbE\(\int_t^TL_z^1(s)|Z(s)|ds\)^p\]^{1\over p}\\
\ns\ds\q\les\(\dbE|\xi|^p\)^{1\over p}+\[\dbE\(\int_t^TL^1_0(s)ds\)^p\]^{1\over p}
+\(\int_t^TL_z^1(s)^2ds\)^{1\over2}\[\dbE\(\int_t^T|Z(s)|^2ds\)^{p\over2}
\]^{1\over p}.\ea$$
As a result,
$$\sup_{s\in[t,T]}\(\dbE|Y(s)|^p\)^{1\over p}\les\(\dbE|\xi|^p\)^{1\over p}
+\[\dbE\(\int_t^TL^1_0(s)ds\)^p\]^{1\over p}+\(\int_t^TL_z^1(s)^2ds\)^{1\over2}\[\dbE\(\int_t^T|Z(s)|^2ds\)^{p\over2}
\]^{1\over p}.$$
Also, making use of the martingale moment inequality (\autoref{Martingale Moment}), one has
\bel{|Z|1}\ba{ll}
\ns\ds\[\dbE\(\int_t^T|Z(s)|^2ds\)^{p\over2}\]^{1\over p}\les \ti K_p\[\dbE\Big|
\int_t^TZ(s)dW(s)\Big|^p\]^{1\over p}\\
\ns\ds\q\les \ti K_p\[\dbE\Big|\xi+\int_t^Tg_1(s,Z(s))ds
-Y(t)\Big|^p\]^{1\over p}\\
\ns\ds\q\les 2 \ti K_p\Big\{\(\dbE|\xi|^p\)^{1\over p}+\[\dbE\(\int_t^TL^1_0(s)ds\)^p\]^{1\over p}+\(\int_t^TL^1_z(s)^2ds
\)^{1\over2}\[\dbE\(\int_t^T|Z(s)|^2ds\)^{p\over2}\]^{1\over p}\Big\},\ea\ee
where $\ti K_p =K_p^{1\over p}$ with $K_p$ being given by \autoref{Martingale Moment}.
For any $0\les t\les\t\les T$, denote
$$\ba{ll}
\ns\ds\Th_y(t,\t)=\(\sup_{s\in[t,\t]}\dbE|Y(s)|^p\)^{1\over p},\qq\Th_z(t,\t)=\[\dbE\(\int_t^\t|Z(s)|^2ds\)^{p\over2}\]^{1\over p},\\
\ns\ds\cL(t,\t)=\(\int_t^\t L^1_z(s)^2ds\)^{1\over2},\qq\cL_0(t,\t)=\[\dbE\(\int_t^\t L_0^1(s)ds\)^p\]^{1\over p}.\ea$$
Then the above procedure leads to
$$\ba{ll}
\ns\ds\Th_y(t,\t)\les\Th_y(\t,\t)+\cL_0(t,\t)+\cL(t,\t)\Th_z(t,\t),\\
\ns\ds\Th_z(t,\t)\les2 \ti K_p\[\Th_y(\t,\t)+\cL_0(t,\t)+\cL(t,\t)\Th_z(t,\t)\].\ea$$
Thus, by letting $\t-t>0$ small so that $2\ti K_p\cL(t,\t)<1$, one has
$$\ba{ll}
\ns\ds\Th_z(t,\t)\les{2\ti K_p\[\Th_y(\t,\t)+\cL_0(t,\t)\]\over1-2\ti K_p\cL(t,\t)},\qq
\Th_y(t,\t)\les{\Th_y(\t,\t)+\cL_0(t,\t)\over1-2\ti K_p\cL(t,\t)}.\ea$$
%
%
%
Pick $0=t_0<t_1<t_2<\cds<t_N=T$ so that
\bel{alpha}\a\equiv{1\over1-2\ti K_p\cL(t_{k-1},t_k)}\equiv{1\over\ds1-2\ti K_p\(\int_{t_{k-1}}^{t_k}
L^1_z(s)^2ds\)^{1\over2}}\in(1,\infty),\q k=1,2,\cds,N.\ee
Then we have
\begin{align*}
\Th_y(t_{k-1},t_k)&\les\a\Th_y(t_k,t_k)+\a\cL_0(t_{k-1},t_k)\les\a\Th_y(t_k,t_{k+1})+\a\cL_0(t_{k-1},t_k)\\
&\les\a^2\Th_y(t_{k+1},t_{k+2})+\a^2\cL_0
(t_k,t_{k+1})+\a\cL_0(t_{k-1},t_k)\les\cds\\
&\les\a^{N-k+1}\Th_y(T,T)+\sum_{i=k}^N\a^{i-k+1}\cL_0(t_{i-1},t_i)\les \a^N\Th_y(T,T)+N\a^N\cL_0(0,T),
\end{align*}
and
$$\ba{ll}
\ns\ds\Th_z(0,T)\les\sum_{k=1}^N\Th_z(t_{k-1},t_k)\les\sum_{k=1}^N2 \ti K_p\a\[
\Th_y(t_k,t_k)+\cL_0(t_{k-1},t_k)\]\\
\ns\ds\qq\qq\les2\ti K_pN\a^{N+1}\Th_y(T,T)+2\ti K_pN^2\a^{N+1}\cL_0(0,T).\ea$$
That is
$$\ba{ll}
\ns\ds\[\sup_{t\in[0,T]}\dbE|Y(t)|^p\]^{1\over p}\les\a^N\(\dbE|\xi|^p\)^{1\over p}+N\a^N\[\dbE\(\int_0^TL_0(s)ds\)^p\]^{1\over p},\\
\ns\ds\[\dbE\(\int_0^T|Z(s)|^2ds\)^{p\over2}\]^{1\over p}
\les2\ti K_pN\a^{N+1}\(\dbE|\xi|^p\)^{1\over p}+2\ti K_pN^2\a^{N+1}\[\dbE\(\int_0^TL_0(s)ds\)^p\]^{1\over p}.\ea$$
Hence,
$$\ba{ll}
\ns\ds\[\sup_{t\in[0,T]}\dbE|Y(t)|^p\]^{1\over p}+\[\dbE\(\int_0^T|Z(s)|^2ds\)^{p\over2}\]^{1\over p}\\
\ns\ds\q\les(1+2\ti K_pN\a)\a^N\(\dbE|\xi|^p\)^{1\over p}+(1+2\ti K_pN\a)N\a^N\[\dbE\(\int_0^TL_0(s)ds\)^p\]^{1\over p}.\ea$$

\ms
We now determine $N$ and $\a$ in terms of the known conditions. From \rf{alpha}, we have have the relation between $\a$ and $N$:
$$N{(\a-1)^2\over\a^2}=4\ti K^2_p\int_0^TL_z^1(s)^2ds\equiv\bar K_p.$$
Thus, for given large $N\in\dbN$, we have
$$0=N(\a-1)^2-\bar K_p\a^2=(N-\bar K_p)\a^2-2N\a+N.$$
This means that for any integer $N>\bar K$,
$$1<\a={2N+\sqrt{4N^2-4(N-\bar K_p)N}\over2(N-\bar K_p)}={N+\sqrt{N\bar K_p}\over N-\bar K_p}={\sqrt N\over\sqrt N-\sqrt{\bar K_p}}.$$
Let
$$f(x)=\[1+2\ti K_px\({\sqrt x\over\sqrt x-\sqrt{\bar K_p}}\)\]\({\sqrt x\over\sqrt x-\sqrt{\bar K_p}}\)^x,\qq x>\bar K_p.$$
Clearly,
$$\lim_{x\to\bar K_p}f(x)=\lim_{x\to\infty}f(x)=\infty.$$
Hence, the following is well-defined:
\bel{K_p^0}\begin{aligned}
\h K_p&=\min\Big\{\[1+2\ti K_pN \({\sqrt N\over\sqrt N-\sqrt{\bar K_p}}\)\]\({\sqrt N\over\sqrt N-\sqrt{\bar K_p}}\)^N\bigm|N\in\dbN,~N>\bar K_p\Big\}\\
&\equiv\[1+2\ti K_pN_p\({\sqrt{N_p}\over\sqrt{N_p}-\sqrt{\bar K_p}}\)\]\({\sqrt{N_p}\over\sqrt{N_p}-\sqrt{\bar K_p}}\)^{N_p}.
\end{aligned}
\ee
Consequently, we have
\bel{estimate4}\[\sup_{t\in[0,T]}\dbE|Y(t)|^p\]^{1\over p}+\[\dbE\(\int_0^T|Z(s)|^2ds\)^{p\over2}\]^{1\over p}
\les\h K_p\(\dbE|\xi|^p\)^{1\over p}+N_p\h K_p\[\dbE\(\int_0^TL_0^1(s)ds\)^p\]^{1\over p}.\ee

\ms

Now, let $g_i(\cd\,,\cd)$ satisfy (H0)$_p$, $\xi_i\in L^p_{\cF_T}(\Om;\dbR^n)$ and let $(Y_i(\cd),Z_i(\cd))$ be the corresponding adapted solution:
$$Y_i(t)=\xi_i+\int_t^Tg_i(s,Z_i(s))ds-\int_t^TZ_i(s)dW(s),\qq t\in[0,T],~i=1,2.$$
Then we have the following BSDE for $(Y_1(\cd)-Y_2(\cd),Z_1(\cd)-Z_2(\cd))$:
\begin{align*}
 Y_1(t)-Y_2(t)&=\xi_1-\xi_2+\int_t^T\big\{g_1(s,Z_1(s))-g_2(s,Z_1(s))+G(s)[Z_1(s)-Z_2(s)]\big\}ds\\
&\q-\int_t^T[Z_1(s)-Z_2(s)]dW(s),\end{align*}
where
$$G(s)={[g_2(s,Z_1(s))-g_2(s,Z_2(s))][Z_1(s)-Z_2(s)]^\top\over|Z_1(s)-Z_2(s)|^2}
{\bf1}_{\{Z_1(s)\ne Z_2(s)\}}.$$
Clearly,
$$|G(s)|\les L_z^1(s),\qq s\in[0,T],~\as$$
By \rf{estimate4}, we obtain
$$\ba{ll}
\ds\[\sup_{t\in[0,T]}\dbE|Y_1(t)-Y_2(t)|^p\]^{1\over p}+\[\dbE\(\int_0^T|Z_1(s)-Z_2(s)|^2ds\)^{p\over2}\]^{1\over p}\\
\ns\ds\q\les\h K_p\(\dbE|\xi_1-\xi_2|^p\)^{1\over p}+N_p\h K_p\[\dbE\(\int_0^T|g_1(s,Z_1(s))-g_2(s,Z_1(s))|ds\)^p\]^{1\over p}.
\ea$$
This completes the proof. \endpf

\ms

We now look at the following Type-I BSVIE:
\bel{BSVIE-I0}Y(t)=\psi(t)+\int_t^Tg_1(t,s,Z(t,s))ds-\int_t^TZ(t,s)dW(s),\qq t\in[0,T].\ee
We introduce the following assumption.

\ms

{\bf(H0)$_p'$} Let $g_1:\D[0.T]\times\dbR^n\times\Om\to\dbR^n$ satisfy the following: For each $(t,z)\in[0,T]\times\dbR^n$, $s\mapsto g_1(t,s,z)$ is $\dbF$-progressively measurable on $[t,T]$.
There exists a deterministic function $L_z^1(\cd\,,\cd)\in L^2(\D[0,T];\dbR_+)$ such that
$$|g_1(t,s,z_1)-g_1(t,s,z_2)|\les L_z^1(t,s)|z_1-z_2|,\qq\forall(t,s)\in\D,~z\in\dbR^n.$$
with the property
\bel{K<1-2}K^0_p\sup_{t\in[0,T]}\(\int_t^TL^1_z(t,s)^2ds\)^{p\over2}<1,\ee
where $K_p^0\equiv(\h K_p)^p$ and $\h K_p$ is given by \rf{bar-K} with $\bar K$ replaced by
\bel{}
\bar K=4\ti K^2_p\sup_{t\in[0,T]}\int_t^TL_z^1(t,s)^2ds.
\ee
Further, $g_1(\cd\,,\cd\,,0)\in L_\dbF^p(\Om;L^1(\D[0,T];\dbR^n))$.

\ms

%
%
The following proposition is concerned with the above BSVIE.

\begin{proposition}\label{simple BSVIE} \sl Let {\rm(H0)$_p'$} hold. Then for any $\psi(\cd)\in L^p_{\cF_T}(0,T;\dbR^n)$, BSVIE \rf{BSVIE-I0} admits a unique adapted solution $(Y(\cd),Z(\cd\,,\cd))\in\cH_\D^p[0,T]$ and with $K_p^0=(\h K_p)^p$, the following holds:
\bel{9.13}\dbE|Y(t)|^p+\dbE\(\int_t^T|Z(t,s)|^2ds\)^{
p\over2}\les K_p^0\dbE\Big|\psi(t)+\int_t^Tg(t,s,0)ds\Big|^p,\q\ae\;t\in[0,T].\ee
%
%
%
%
If $g_i(\cd\,,\cd\,,\cd)$ satisfies {\rm(H0)$_p'$}, $\psi_i(\cd)\in L^p_{\cF_T}(\Om;\dbR^n)$, and $(Y_i(\cd),Z_i(\cd\,,\cd))$ is adapted solution to the corresponding BSVIE, $i=1,2$, then
\bel{9.14}\ba{ll}
\ds\dbE|Y_1(t)-Y_2(t)|^p+\dbE\(\int_t^T|Z_1(t,s)-Z_2(t,s)|^2ds\)^{
p\over2}\\
\ns\ds\q\les K_p^0\dbE\Big|\psi_1(t)-\psi_2(t)+\int_t^T[g_1(t,s,Z_1(t,s))-g_2(t,s,Z_1(t,s))]
ds\Big|^p,\q\ae\;t\in[0,T].\ea\ee
\end{proposition}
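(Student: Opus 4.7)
The generator of BSVIE \rf{BSVIE-I0} is independent of $Y$, so for each fixed $t\in[0,T)$ it reduces to a BSDE on $[t,T]$ with terminal value $\psi(t)$ and generator $s\mapsto g_1(t,s,\cd)$. The plan is to solve this BSDE fiber-by-fiber via \autoref{BSDE-prop1} and then assemble the solutions into $(Y(\cd),Z(\cd\,,\cd))$.

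Concretely, for each $t\in[0,T)$, consider the BSDE parameterized by $t$:
$$\eta(t,r)=\psi(t)+\int_r^Tg_1(t,s,\z(t,s))ds-\int_r^T\z(t,s)dW(s),\q r\in[t,T].$$
By {\rm(H0)$_p'$}, the generator $s\mapsto g_1(t,s,\cd)$ is $\dbF$-progressively measurable on $[t,T]$ and Lipschitz in $z$ with coefficient $L^1_z(t,\cd)$, and $g_1(t,\cd,0)\in L^p_\dbF(\Om;L^1(t,T;\dbR^n))$. Hence {\rm(H0)$_p$} holds on $[t,T]$, and \autoref{BSDE-prop1} yields a unique adapted solution $(\eta(t,\cd),\z(t,\cd))$. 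To obtain the compact form of the estimate \rf{9.13}, I would employ a translation trick: setting $\bar\eta(t,r)=\eta(t,r)+\int_t^rg_1(t,s,0)ds$ produces
$$\bar\eta(t,r)=\wt\xi(t)+\int_r^T\wt g(t,s,\z(t,s))ds-\int_r^T\z(t,s)dW(s),$$
with $\wt\xi(t)=\psi(t)+\int_t^Tg_1(t,s,0)ds$ and $\wt g(t,s,z)=g_1(t,s,z)-g_1(t,s,0)$ vanishing at $z=0$. Applying \autoref{BSDE-prop1} to this translated BSDE (whose $L_0^1$-term is now identically zero) yields $\dbE|\bar\eta(t,t)|^p+\dbE(\int_t^T|\z(t,s)|^2ds)^{p/2}\les K^0_p\,\dbE|\wt\xi(t)|^p$. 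Since $\bar\eta(t,t)=\eta(t,t)$, defining $Y(t)=\eta(t,t)$ and $Z(t,s)=\z(t,s)$ on $\D[0,T]$ gives \rf{9.13}, and integrating this pointwise estimate in $t$ produces $(Y(\cd),Z(\cd\,,\cd))\in\cH^p_\D[0,T]$.

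The joint $\cB(\D[0,T])\otimes\cF_T$-measurability of $(t,s)\mapsto Z(t,s)$ would follow from the Lipschitz dependence of the parameterized BSDE on the parameter $t$ (deduced from \rf{estimate2} applied to nearby values of $t$), which allows a construction on rational parameters followed by a standard limiting argument. Uniqueness is immediate: any other adapted solution $(Y'(\cd),Z'(\cd\,,\cd))$ must satisfy, for almost every $t$, the same parameterized BSDE on $[t,T]$ with terminal $\psi(t)$, and the BSDE uniqueness half of \autoref{BSDE-prop1} forces $(Y',Z')=(Y,Z)$. The stability estimate \rf{9.14} is obtained by writing the BSDE for $(Y_1(t)-Y_2(t),Z_1(t,\cd)-Z_2(t,\cd))$, linearizing the contribution $g_2(t,s,Z_1)-g_2(t,s,Z_2)=G(t,s)[Z_1-Z_2]$ with $|G(t,s)|\les L^1_z(t,s)$, and invoking the stability part \rf{estimate2} of \autoref{BSDE-prop1}.

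The main obstacle is the \emph{uniformity in $t$} of the BSDE constant appearing in the pointwise estimate. In \rf{K_p^0-1}, the constant $\h K_p$ depends monotonically on $\bar K_p=4\ti K_p^2\int_0^T L_z^1(s)^2ds$, but in the BSVIE setting the Lipschitz coefficient $L_z^1(t,\cd)$ varies with $t$. The hypothesis \rf{K<1-2} is calibrated precisely so that replacing the integral by $\sup_{t\in[0,T]}\int_t^T L_z^1(t,s)^2ds$ still produces a finite $\bar K$ and hence a single constant $K_p^0=(\h K_p)^p$ independent of $t$; this uniformity is what lets the pointwise estimate \rf{9.13} integrate into a global $\cH_\D^p$-norm bound. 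A subsidiary (but purely technical) issue is the joint-measurability assembly of the family $\{(\eta(t,\cd),\z(t,\cd))\}_{t\in[0,T]}$, which is handled by continuity in $t$ together with separability.
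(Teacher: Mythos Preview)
Your approach is essentially the same as the paper's. The paper rewrites the BSVIE directly as
$$Y(t)=\Big[\psi(t)+\int_t^Tg_1(t,s,0)ds\Big]+\int_t^T[g_1(t,s,Z(t,s))-g_1(t,s,0)]ds-\int_t^TZ(t,s)dW(s),$$
and then passes to the parameterized BSDE with terminal value $\psi(t)+\int_t^Tg_1(t,s,0)ds$ and centered generator $\wt g_1(t,s,\z)=g_1(t,s,\z)-g_1(t,s,0)$; your ``translation trick'' $\bar\eta(t,r)=\eta(t,r)+\int_t^rg_1(t,s,0)ds$ produces exactly this BSDE, so the two presentations coincide. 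The stability estimate via the linearization $G(t,s)[Z_1-Z_2]$ with $|G(t,s)|\les L_z^1(t,s)$ is also identical to the paper's argument, and your remarks on the $t$-uniformity of $\h K_p$ (through the $\sup_t$ in the definition of $\bar K$) and on joint measurability make explicit points the paper leaves implicit.
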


\begin{proof} \rm First, we write the equation as follows:
$$Y(t)=\psi(t)+\int_t^Tg_1(t,s,0)ds+\int_t^T[g_1(t,s,Z(t,s))-g_1(t,s,0)]ds-\int_t^TZ(t,s)dW(s).$$
For almost all $t\in[0,T]$, consider BSDE:
\bel{BSDE-I0}\eta(t,r)=\psi(t)+\int_t^Tg_1(t,s,0)ds+\int_r^T[g_1(t,s,\z(t,s))-g_1(t,s,0)]ds-\int_r^T\z(t,s)dW(s),\q r\in[t,T].\ee
Then regarding the sum of the first two terms on the right-hand side as the terminal state, and note that the generator $(s,\z)\mapsto g_1(t,s,\z)-g_1(t,s,0)\equiv\wt g_1(t,s,\z)$ has the property that $\wt g_1(t,s,0)=0$, satisfying (H0)$_p$, one has
\bel{}\ba{ll}
\ds\[\sup_{r\in[t,T]}\dbE|\eta(t,r)|^p\]^{1\over p}+\[\dbE\(\int_t^T|\z(t,s)|^2ds\)^{p\over2}\]^{1\over p}\les \h K_p\(\dbE\Big|\psi(t)+\int_t^Tg_1(t,s,0)ds\Big|^p\)^{1\over p}.\ea\ee
By setting
$$Y(t)=\eta(t,t),\q Z(t,s)=\z(t,s),$$
we have that $(Y(\cd),Z(\cd\,,\cd))$ is an adapted solution to BSVIE \rf{BSVIE-I0}, and
$$\begin{aligned}
&\[\dbE|Y(t)|^p+\dbE\(\int_t^T|Z(t,s)|^2ds\)^{p\over2}\]^{1\over p}\les\[\dbE|Y(t)|^p\]^{1\over p}+\[\dbE\(\int_t^T|Z(t,s)|^2ds\)^{p\over2}\]^{1\over p}\\
&\q\les \h K_p\(\dbE\Big|\psi(t)+\int_t^Tg_1(t,s,0)ds\Big|^p\)^{1\over p}.\end{aligned}$$
Then, \rf{9.13} holds. The uniqueness follows from  \rf{K<1-2}.
Next, let $g_i(\cd\,,\cd)$ satisfy (H0)$_p'$, $\psi_i(\cd)\in L^p_{\cF_T}(\Om;\dbR^n)$, and let $(Y_i(\cd),Z_i(\cd\,,\cd))$ be the corresponding BSVIE. Then
$$\ba{ll}
\ns\ds Y_1(t)-Y_2(t)=\psi_1(t)-\psi_2(t)+\int_t^T\(g_1(t,s,Z_1(t,s))-g_2(t,s,Z_1(t,s))\)ds\\
\ns\ds\qq\qq\qq\q+\int_t^TG(t,s)[Z_1(t,s)-Z_2(t,s)]ds-\int_t^T[Z_1(t,s)-Z_2(t,s)]dW(s),\qq t\in[0,T],\ea$$
where,
$$G(t,s)={[g_2(t,s,Z_1(t,s))-g_2(t,s,Z_2(t,s))][Z_1(t,s)-Z_2(t,s)]^\top\over|Z_1(t,s)-Z_2(t,s)|^2}{\bf1}_{\{
Z_1(t,s)\ne Z_2(t,s)\}}.$$
It is clear that
$$|G(t,s)|\les L_z^1(t,s),\qq\forall(t,s)\in\D[0,T].$$
Then, from the above, one has
\bel{}\ba{ll}
\ds\[\dbE|Y_1(t)-Y_2(t)|^p+\dbE\(\int_t^T|Z_1(t,s)-Z_2(t,s)|^2ds\)^{p\over2}\]^{1\over p}\\
\ns\ds\q\les\h K_p\(\dbE\Big|\psi_1(t)-\psi_2(t)+\int_t^T[g_1(t,s,Z_1(t,s))-g_2(t,s,Z_1(t,s))]ds\Big|^p\)^{1\over p}.\ea\ee
Hence, \rf{9.14} follows.
\end{proof}

\ms

We remark that \autoref{simple BSVIE} still holds true if  \rf{K<1-2} is replaced by
\bel{}\sup_{t\in[0,T]}\(\int_t^TL^1_z(t,s)^{2+\e}ds\)<\infty.\ee
with $\e>0$.
The following is concerned with the property of $g_1(\cd)$ in the decomposition \rf{g=g_0+g_1}.

\ms

Let $h:[0,T]\times\dbR\times\Om\to\dbR$ be $\cB([0,T]\times\dbR\times\cF_T)$-measurable and $h(s,y)$ be continuous with respect to the variable $y$.
Moreover, suppose that $h(\cd\,,0)\in L^1_{\cF_T}(0,T;\dbR)$ and there exists a function $L(\cd)\in L^\infty(0,T;\dbR_+)$ such that
$$|h(s,y)|\les |h(s,0)|+L(s)|y|.$$
Let
$$\h h(s,y)=\dbE_s[h(s,y)],\q \forall (s,y)\in[0,T]\times\dbR.$$
Then the mapping $y\mapsto \h h(s,y,\om)$ is continuous,
and for any fixed $y\in\dbR$, the process $\h h(\cd\,,y)$ is $\dbF$-adapted.

\ms

\bl{7.4} \sl
For any $Y(\cd)\in L^1_\dbF(0,T;\dbR)$, the process $\h h(s, Y(s));s\in[0,T]$ is $\dbF$-adapted.
\footnote{This result is obtained by a useful discussion with Chenchen Mou (City University of Hong Kong).}
\el

\begin{proof}
By the definition of $\dbF$-adapted processes, we need only  to prove that for any $s\in[0,T]$, $\h h(s,Y(s))\in\cF_s$.
Then it suffices to show that for any $O=(a,b)\subseteq\dbR$,
$$
\{\om\in\Om\,|\,\h h(s, Y(s,\om),\om)\in \bar O=[a,b] \}\in\cF_s,
$$
because $(a,\infty)=\bigcup_{k,n=1}^\infty[a+1/n,a+k]$.
For the ease of presentation, we write $\{\om\in\Om\,|\,\h h(s, Y(s,\om),\om)\in O \}$ as $\{\h h(s, Y(s))\in O \}$.

\ms

It is clearly seen that
\bel{}
\{\h h(s, Y(s))\in\bar O \}=\bigcup_{y\in\dbR}\big(\{\h h(s, y)\in\bar O \}\cap\{Y(s)=y\}\big).
\ee
Let $A$ be a countable dense set of $R$.
We claim that
\bel{claim}
\bigcup_{y\in\dbR}\big(\{\h h(s, y)\in \bar O \}\cap\{Y(s)=y\}\big)
=\bigcap_{m=1}^\infty \bigcap_{n=1}^\infty \bigcup_{y\in A}\(\big\{\h h(s, B_{1\over n}(y))\in O_m \big\}\cap\big\{Y(s)\in B_{1\over n}(y)\big\}\),
\ee
where
\begin{align*}
&\big\{\h h(s, B_{1\over n}(y))\in O_m \big\}=\bigcup_{x\in B_{1\over n}(y)}\big\{\h h(s, x)\in O_m \big\}\q\hbox{and}\q O_m=(a-1/m,b+1/m).
\end{align*}
Since the mapping $y\mapsto h(s,y)$ is continuous and $O_m$ is an open set, we have
$$
\big\{\h h(s, B_{1\over n}(y))\in O_m \big\}=\bigcup_{x\in B_{1\over n}(y)\cap A}\big\{\h h(s, x)\in O_m \big\}.
$$
If the claim \rf{claim} holds, the above implies that
$$
\bigcup_{y\in\dbR}\big(\{\h h(s, y)\in\bar O \}\cap\{Y(s)=y\}\big)
=\bigcap_{m=1}^\infty\bigcap_{n=1}^\infty \bigcup_{y\in A}\(\bigcup_{x\in B_{1\over n}(y)\cap A}\big\{\h h(s, x)\in O_m \big\}\bigcap\big\{Y(s)\in B_{1\over n}(y)\big\}\).
$$
Then from the facts  $\{\h h(s, x)\in O_m \}\in\cF_s$, $\{Y(s)\in B_{1\over n}(y)\}\in\cF_s$, and that $A$ is a countable set, we get
$$
\{\h h(s, Y(s))\in \bar O \}\in \cF_s.
$$

\ms

Now let us prove that the claim \rf{claim} really holds.

\ms
\emph{Step 1.}
$
\bigcup_{y\in\dbR}\big(\{\h h(s, y)\in\bar O \}\cap\{Y(s)=y\}\big)
\subseteq\bigcap_{m=1}^\infty \bigcap_{n=1}^\infty \bigcup_{y\in A}\big(\big\{\h h(s, B_{1\over n}(y))\in O_m \big\}\cap\big\{Y(s)\in B_{1\over n}(y)\big\}\big).
$

\ms
 For any $\om\in\bigcup_{y\in\dbR}\big(\{\h h(s, y)\in\bar O \}\cap\{Y(s)=y\}\big)$,
there exists  a $y\in\dbR$ such that $\h h(s, y,\om)\in\bar O$ and $Y(s,\om)=y$.
For any $m>0$ and $n>0$, $\bar O\subseteq O_m$ and there exists a $y_{mn}\in A$ such that $y\in B_{1\over n}(y_{mn})$.
Then
$$
\om\in\big\{\h h(s, B_{1\over n}(y_{mn}))\in O_m \big\}\cap\big\{Y(s)\in B_{1\over n}(y_{mn})\big\}.
$$
Thus, ``$\subseteq$" holds.

\ms

\emph{Step 2.}
$
\bigcup_{y\in\dbR}\big(\{\h h(s, y)\in \bar O \}\cap\{Y(s)=y\}\big)
\supseteq\bigcap_{m=1}^\infty \bigcap_{n=1}^\infty \bigcup_{y\in A}\big(\big\{\h h(s, B_{1\over n}(y))\in O_m \big\}\cap\big\{Y(s)\in B_{1\over n}(y)\big\}\big).
$

\ms

Let $\om\in\bigcap_{m=1}^\infty \bigcap_{n=1}^\infty\bigcup_{y\in A}\big(\big\{\h h(s, B_{1\over n}(y))\in O_m \big\}\cap\big\{Y(s)\in B_{1\over n}(y)\big\}\big)$.
Then for any $m$ and $n$, there exists a $y_{m,n}\in A$ such that
$$
Y(s,\om)\in B_{1\over n}(y_{mn})\q\hbox{and}\q \exists z_{mn}\in B_{1\over n}(y_{mn}) \hbox{~~such~that~~} \h h(s, z_{mn},\om)\in O_m.
$$
Note that
$$
|y_{mn}-Y(s,\om)|\les {1\over n}\q \hbox{and}\q |y_{mn}-z_{mn}|\les {1\over n}.
$$
By the continuity of $\h h(s,y)$ with respect to $y$, we get
$$
\h h(s, Y(s,\om),\om)\in \bigcap_{m=1}^\infty O_m= \bar O.
$$

\end{proof}


\begin{thebibliography}{90}
\addtolength{\itemsep}{-1.0ex}

\bibitem{Agram-Djehiche2020} N.~Agram and B.~Djehiche,
\it Reflected backward stochastic Volterra integral equations and related time-inconsistent optimal stopping problems,
\sl Systems Control Lett.,
\rm {\bf155}  (2021), 104989.

\bibitem{Agram-Oksendal2015} N.~Agram and B.~{\O}ksendal,
\it Malliavin calculus and optimal control of stchastic Volterra equations,
\sl J. Optim. Theory Appl.,
\rm {\bf167} (2015), 1070--1094.

\bibitem{Agram2018} N.~Agram,
\it Dynamic risk measure for BSVIE with jumps and semimartingale issues,
\sl Stoch. Anal. Appl.,
\rm {\bf 37} (2019), 1--16.

\bibitem{Anh-Grecksch-Yong2011} V.~V.~Anh, W.~Grecksch, and J.~Yong,
\it Regularity of backward stochastic Volterra integral equations in Hilbert spaces,
\sl Stoch. Anal. Appl.,
\rm {\bf 29} (2011), 146--168.


\bibitem{Di Persio2014} L.~Di Persio,
\it Backward stochastic Volterra integral equation approach to stochastic differential utility,
\sl Int. Electron. J. Pure Appl. Math.,
\rm {\bf8} (2014), 11--15.

\bibitem{El Karoui-Peng-Quenez1997} N.~EI~Karoui, S.~Peng, and M.~C.~Quenez,
\it Backward stochastic differential equations in finance,
\sl Math. Finance,
\rm {\bf7} (1997),  1--71.

\bibitem{Hamaguchi2020} Y.~Hamaguchi,
\it Extended backward stochastic Volterra integral equations and their applications to time-inconsistent stochastic recursive control problems,
 \sl Math. Control Relat. Fields,
\rm {\bf11} (2021), 433--478.

\bibitem{Hamaguchi2021-COCV} Y. Hamaguchi,
\it Infinite horizon backward stochastic Volterra integral equations and discounted control problems,
\sl ESAIM Control Optim. Calc. Var.,
\rm {\bf27} (2021),  No. 101.

\bibitem{Hamaguchi2021} Y. Hamaguchi,
\it On the maximum principle for optimal control problems of stochastic Volterra integral equations with delay,
\rm  preprint, arXiv:2109.06092, 2021.


\bibitem{Hamaguchi2021-1} Y. Hamaguchi,
\it Variation of constants formulae for forward and backward stochastic Volterra integral equations,
\rm  preprint, arXiv:2112.01277, 2021.

\bibitem{Hernandez2021} C. Hern\'{a}ndez,
\it On quadratic multidimensional type-I BSVIEs, infinite families of BSDEs and their applications,
\rm preprint, arXiv:2111.11131, 2021.

\bibitem{Hernandez2020} C.~Hern\'{a}ndez and D.~Possama\"{\i},
\it Me, myself and I: a general theory of non-Markovian time-inconsistent stochastic control for sophisticated agents,
\rm Ann. Appl. Probab., to appear; arXiv:2002.12572.


\bibitem{HP2} C. Hern\'{a}ndez and D. Possama\"{\i},
\it A unified approach to well-posedness of type-I backward stochastic Volterra integral equations,
\sl Electron. J. Probab., \rm {\bf26} (2021),  1--35.






\bibitem{Hu2018} Y.~Hu and B.~{\O}ksendal,
\it  Linear Volterra backward stochastic integral equations,
\sl Stochastic Process. Appl.,
\rm {\bf 129} (2019), 626--633.

\bibitem{Hu-Peng1995} Y. Hu and S. Peng,
\it Solution of forward-backward stochastic differential equations,
\sl Probab. Theory Related Fields,
\rm {\bf 103} (1995),  273--283.

\bibitem{Karatzas-Shreve1988} I.~Karatzas and S.~Shreve,
\sl Brownian Motion and Stochastic Calculus,
\rm Springer-Verlag, New York, 1988.

\bibitem{Kromer-Overbeck2017} E.~Kromer and L.~Overbeck,
\it Differentiability of BSVIEs and dynamical capital allocations,
\sl Int. J. Theor. Appl. Finance,
\rm {\bf20} (2017), No. 07, 1750047.

\bibitem{Lei-Pun2021} Q.~Lei and C.~S.~Pun,
\it Nonlocal fully nonlinear parabolic differential equations arising in time-inconsistent problems,
\rm preprint, arXiv:2110.04237, 2021.

\bibitem{Lin-2002} J.~Lin,
\it Adapted solution of a backward stochastic nonlinear Volterra integral equation,
\sl Stoch. Anal. Appl.,
\rm {\bf 20} (2002), 165--183.

\bibitem{Ma-Yong1999} J. Ma and J. Yong,
\it Forward-backward stochastic differential equations and their applications,
\rm Springer Science \& Business Media, 1999.

\bibitem{Pardoux-Peng1990}  E. Pardoux and S. Peng,
\it Adapted solution of a backward stochastic differential equation,
\sl Systems \& Control Lett.,
\rm {\bf14} (1990),  55--61.


\bibitem{Peng-Yang2009} S.~Peng and Z.~Yang,
\it Anticipated backward stochastic differential equations,
\sl  Ann. Probab.,
\rm {\bf 37} (2009),  877--902.

\bibitem{Ren2010} Y.~Ren,
\it On solutions of backward stochastic Volterra integral equations with jumps in Hilbert spaces,
\sl J. Optim. Theory Appl.,
\rm {\bf 144} (2010), 319--333.

\bibitem{Shi-Wang-Yong2013} Y.~Shi, T.~Wang, and J.~Yong,
\it Mean-field backward stochastic Volterra integral equations,
\sl Discrete Contin. Dyn. Syst. Ser. B,
\rm {\bf 18} (2013), 1929--1967.

\bibitem{Shi-Wang-Yong 2015} Y.~Shi, T.~Wang, and J.~Yong,
\it Optimal control problems of forward-backward stochastic Volterra integral equations,
\sl Math. Control Relat. Fields,
\rm {\bf5} (2015), 613--649.

\bibitem{Shi-Wen-Xiong2020} Y.~Shi, J.~Wen, and J. Xiong,
\it Backward doubly stochastic Volterra integral equations and their applications,
\sl J. Differential Equations,
\rm {\bf269} (2020), 6492--6528.

\bibitem{Viens-Zhang2019} F.~Viens and J.~Zhang,
\it A martingale approach for fractional Brownian motions and related path dependent PDEs,
\sl Ann. Appl. Probab.,
\rm {\bf 29} (2019),  3489--3540.

\bibitem{H.Wang2021} H.~Wang,
 {\it Extended backward stochastic Volterra integral equations, quasilinear parabolic equations, and Feynman--Kac formula,}
\sl Stoch. Dyn.,
\rm {\bf 21} (2021), 2150004.

\bibitem{Wang-Sun-Yong2021} H.~Wang, J.~Sun, and J.~Yong,
\it Recursive utility processes, dynamic risk measures and quadratic backward stochastic Volterra integral equations,
\sl Appl. Math. Optim.,
\rm {\bf 84} (2021), 145--190.

\bibitem{H.Wang-Yong2021} H.~Wang and J.~Yong,
\it Time-inconsistent stochastic optimal control problems and backward stochastic volterra integral equations,
\sl ESAIM Control Optim. Calc. Var.,
\rm {\bf27} (2021), No. 22.

\bibitem{Wang-Yong-Zhang2021} H. Wang, J. Yong, and J. Zhang,
\it Path dependent Feynman--Kac formula for forward backward stochastic Volterra integral equations,
\sl  Ann. Inst. Henri Poincar\'{e} Probab. Stat.,
\rm {\bf58} (2022), 603--638.

\bibitem{Wang-Yong-Zhou2022} H. Wang, J. Yong, and C. Zhou,
\it Linear-Quadratic Optimal Controls for Stochastic Volterra Integral Equations: Causal State Feedback and Path-Dependent Riccati Equations,
\rm preprint, arXiv:2204.08694, 2022.

\bibitem{T.Wang-2020} T. Wang,
\it Necessary conditions of Pontraygin's type for general controlled stochastic Volterra integral equations,
\sl ESAIM Control Optim. Calc. Var.,
\rm {\bf26} (2020),  No. 16.

\bibitem{Wang-Yong2015} T.~Wang and J.~Yong,
\it Comparison theorems for some backward stochastic Volterra integral equations,
\sl Stochastic Process. Appl.,
\rm {\bf125} (2015), 1756--1798.

\bibitem{T.Wang-Yong2019} T.~Wang and J.~Yong,
\it Backward stochastic Volterra integral equations---representation of adapted solutions,
\sl Stochastic Process. Appl.,
\rm {\bf129} (2019), 4926--4964.

\bibitem{Wang-Yong2022} T. Wang and J. Yong,
\it Spike Variations for Stochastic Volterra Integral Equations,
\rm preprint, arXiv:2205.13486, 2022.

\bibitem{T.Wang-Zhang2017} T.~Wang and H.~Zhang,
\it Optimal control problems of forward-backward stochastic Volterra integral equations with closed control regions,
\sl SIAM J. Control Optim.,
\rm {\bf55} (2017), 2574--2602.

\bibitem{Wang-Zhang2007} Z.~Wang and X.~Zhang,
\it Non-Lipschitz backward stochastic Volterra type equations with jumps,
\sl  Stoch. Dyn.,
\rm {\bf7} (2007), 479--496.


\bibitem{Wen-Shi2020}  J. Wen and Y. Shi,
\it Solvability of anticipated backward stochastic Volterra integral equations,
\sl Statist. Probab. Lett.,
\rm {\bf156} (2020), 108599.

\bibitem{Yong1997} J. Yong,
\it Finding adapted solutions of forward-backward stochastic differential equations: method of continuation,
\sl Probab. Theory Related Fields,
\rm {\bf107} (1997),  537--572.


\bibitem{Yong2007} J.~Yong,
\it Continuous-time dynamic risk measures by backward stochastic Volterra integral equations,
\sl Appl. Anal.,
\rm {\bf86} (2007), 1429--1442.

\bibitem{Yong2008} J.~Yong,
\it Well-posedness and regularity of backward stochastic Volterra integral equations,
\sl Probab. Theory Related Fields,
\rm {\bf142} (2008),  21--77.

\bibitem{Yong2012} J.~Yong,
\it Time-inconsistent optimal control problems and the equilibrium HJB equation,
\sl Math. Control Relat. Fields,
\rm {\bf2} (2012), 271--329.

\bibitem{Yong2014} J.~Yong,
\it Time-inconsistent optimal control problems,
 \sl Proceedings of 2014 ICM, Section 16. Control Theory and Optimization,
 \rm (2014), 947--969.


\bibitem{Zhang2017} J.~Zhang,
\sl Backward Stochastic Differential Equations: From Linear to Fully Nonlinear Theory,
\rm Springer, New York, 2017.

\end{thebibliography}
\end{document}